\numberwithin{equation}{section}
\newtheorem{theorem}{Theorem}[section]
\newtheorem{proposition}[theorem]{Proposition}%[section]
\newtheorem{corollary}[theorem]{Corollary}%[section]
\newtheorem{lemma}[theorem]{Lemma}%[section]
\definecolor{darkred}{rgb}{0.8,0,0}
\definecolor{darkblue}{rgb}{0,0,0.7}
\definecolor{darkgreen}{rgb}{0,0.4,0}
\newcommand{\C}{\mathbb{C}}
\newcommand{\N}{\mathbb{N}}
\newcommand{\R}{\mathbb{R}}
\newcommand{\Z}{\mathbb{Z}}
\renewcommand{\AA}{\mathscr{A}}
\newcommand{\FF}{\mathscr{F}}
\newcommand{\HH}{\mathscr{H}}
\newcommand{\LL}{\mathscr{L}}
\newcommand{\PP}{\mathscr{P}}
\renewcommand{\SS}{\mathscr{S}}
\newcommand{\VV}{\mathscr{V}}
\newcommand{\eps}{\varepsilon}
\renewcommand{\d}{{\rm d}}
\newcommand{\un}{{\rm 1\kern -2.5pt l}}
\newcommand{\de}{\partial}
\newcommand{\supp}{{\rm supp}}
\renewcommand{\div}{\mathrm{div}}
\newcommand{\RRR}{\color{black}}
\begin{document}
%%%%%%%%%%%%%%%%%%%%%%%%%%%%%%%%%%%%%%%%%%%%%%%%%%%%%%%%%%%%%%%%%%%%%%
%%%%%%%%%%%%%%%%%%%%%%%%%%%%%%%%%%%%%%%%%%%%%%%%%%%%%%%%%%%%%%%%%%%%%%
\title[]{Fractional high order thin film equation: gradient flow approach} 
\date{\today}
\keywords{Fractional operator, thin film equation, Minimizing movements scheme, Gradient flow, Wasserstein distance}
\subjclass[2010]{35A01, 35R11, 35G25, 35K46, 49K20, 35B09}%{35K65, 35K40, 47J30, 35Q92, 35B33}
%%%%%%%%%%%%%%%%%%%%%%%%%%%%%%%%%%%%%%%%%%%%%%%%%%%%%%%%%%%%%%%%%%%%%%%%%%%%%%%
%%%%%%%%%%%%%%%%%%%%%%%%%%%%%%%%%%%%%%%%%%%%%%%%%%%%%%%%%%%%%%%%%%%%%%%%%%%%%%%
\begin{abstract}
We prove existence of weak solutions of a fractional thin film type equation in any space dimension and for any order of the equation.
The proof is based on a gradient flow technique in the space of Borel probability measures endowed with the Wasserstein distance.

\end{abstract}

\author{S. Lisini}
\address{Stefano Lisini -- Universit\`a di Pavia, Dipartimento di Matematica "F. Casorati", via Ferrata 5, 27100 Pavia, Italy}
\email{stefano.lisini@unipv.it}

%\pagestyle{plain}
%
%
%\author{}
%\address{}
%\email{}
%
%\author{}
%\address{}
%\email{}

\thanks{}

\maketitle
%\tableofcontents
%\newpage
\section{Introduction}

%The aim of this paper is that to use a gradient flow approach in order to prove existence of a fractional thin film type equation.

In this paper we prove existence of non-negative solutions of the following evolution  problem:
\begin{equation}\label{equation}
\begin{cases}
\partial_t u-\mathrm{div}(u\nabla (\LL_s u) )=0 &\mbox{in }(0,+\infty)\times \R^d , \\
%(-\Delta)^sv=u &\mbox{in }\mathbb{R}^d\times(0,+\infty),\\
u(0,\cdot)=u_0 &\mbox{in }{\R}^d,
\end{cases}
\end{equation}
where the operator $\LL_s$ is the $s$-fractional Laplacian on ${\R}^d$ and $s\in (0,+\infty)$. The order of the equation in \eqref{equation} is $2+2s$.
We assume that the initial datum $u_0\in L^1(\R^d)$ satisfies $u_0\geq 0$ and $ \displaystyle \int_{\R^d}|x|^2u_0(x)\,\d x <+\infty$. 
%In other words, $u_0$ is a density of a Borel probability measure on ${\R^d}$ with finite second moment.

The linear operator $\LL_s$, also denoted by $(-\Delta)^s$, can be
defined using the Fourier transform by
\begin{equation}\label{defLs}
\widehat{\LL_su}(\xi):=|\xi|^{2s}\hat u(\xi).
\end{equation}
For the Fourier transform we use the following convention $\hat v(\xi):= \displaystyle \int_{{\R}^d}e^{-ix\cdot\xi}\, v(x)\,\d x$ for $v\in L^1(\R^d)$. 
Recalling the link between the Fourier transform and the differentiation, it is immediate to check that
for $s=1$, $\LL_1=-\Delta$ is the classical Laplacian, and for $s=2$, $\LL_2=(-\Delta)^2$ is the classical bi-Laplacian.
Also in the case $s\in \N$, we still use the terminology ``fractional Laplacian'' for the operator $\LL_s$. 

The equation in \eqref{equation} is a fractional version of a Thin Film type Equation.
The case $s\in(0,1)$ was recently studied in \cite{SV20} for existence of weak solutions and their asymptotic behavior.
The case $s=1$ in dimension $d=1$ 
%known also as the Hele-Shaw flow,
has been treated in \cite{O98} and \cite{GO01}, where the gradient flow formulation 
has been highlighted. 
Still in the case $s=1$ the gradient flow formulation has been exploited in \cite{MMS09} to prove existence in any dimension.
For higher order of the equation, precisely for $s>1$, general existence theorems (at the knowledge of the author) 
are not available in the literature except in particular cases: 
in dimension $d=1$ and $s\in\N$ the first existence result was proved in \cite{BF90} 
(see also \cite{CG10} and \cite{FK04} for the case $d=1$ and $s=4$). Another interesting results in one dimension is contained in \cite{IM11}.% and \cite{IM15}.
%Moreover, the gradient flow interpretation in

%The existence, for the case $s=1$, using a gradient flow interpretation by means of the minimizing movements scheme
%was exploited in \cite{MMS09}.
The aim of this paper is to prove an existence result for problem \eqref{equation}, for all order $s\in(0,+\infty)$,
using the gradient flow interpretation in the space of Probability measures endowed with the Wasserstein distance.
The problem of the proof of existence of weak solutions of \eqref{equation} using a gradient flow technique has been raised in Section 7 of \cite{SV20}.

We notice that this technique automatically gives the conservation of the mass and the non-negativity of the solutions.
From now on we assume that the initial mass $\int_{\R^d}u_0(x)\,\d x=1$.

%This technique as the advantage to give a constructive algorithm that could be used for numerical simulations (see \cite{}...........). 
\medskip
In the rest of the introduction we describe the technique and we illustrate the main result of the paper.

 \subsection*{The gradient flow setting and the main result}
%In order to prove the existence
%of solutions of the Cauchy problem  \eqref{equation} we exploit the gradient flow
%structure of the problem.

We denote by $\PP_2(\R^d)$ the space of Borel probability measures on $\R^d$ 
with finite second moment. The space $\PP_2(\R^d)$, endowed with the 2-Wasserstein distance $W$,
 is a complete and separable metric space
(see Subsection \ref{Sub:Wasserstein} for the definition of $W$ and its properties).
For  $u\in \PP_2(\R^d)$ we define the energy functional
\begin{equation*}
	\FF_s(u):=\frac12\|u\|^2_{\dot H^{s}(\R^d)}
	%:=\frac12\frac1{(2\pi)^d} \int_{{\R}^d}|\xi|^{2s}|\hat u (\xi)|^2\,\d\xi,
\end{equation*}
where $\|u\|_{\dot H^{s}(\R^d)}$ is the
seminorm of the homogeneous
Sobolev space ${\dot H^{s}({\R}^d)}$ (see Subsection \ref{SobolevHomSpace}) defined as follows
\begin{equation*}
	\|u\|^2_{\dot H^{s}(\R^d)}
	:=\frac1{(2\pi)^d} \int_{{\R}^d}|\xi|^{2s}|\hat u (\xi)|^2\,\d\xi.
\end{equation*}

%Using Plancherel's formula it is immediate to show that
%\[ \|u\|^2_{\dot H^{s}(\R^d)}=\|\LL_{s/2}u\|^2_{L^{2}(\R^d)}.\]
%
We prove that
a solution of the Cauchy problem  \eqref{equation} can be obtained using the so-called 
minimizing movement approximation scheme (in the terminology introduced by De Giorgi \cite{DG93}),
applied to the functional $\FF_s$ in the metric space $(\PP_2(\R^d),W)$.
%The minimizing movements approach allows the generalization to metric spaces of  the variational form of the Euler implicit discretization for gradient flows.
A general theory of  minimizing movements in metric spaces and its applications to the  space $(\PP_2(\R^d),W)$ 
is contained in the book of Ambrosio-Gigli-Savar\'e \cite{AGS}. 
The gradient flow approach in $(\PP_2(\R^d),W)$ with this approximation scheme
was first highlighted by Jordan-Kinderlehrer-Otto in the seminal paper \cite{JKO}
for the Fokker-Planck equation.
The first study of a fourth order equation using this technique was carried out by Gianazza-Savar\'e-Toscani \cite{GST}.
The gradient flow approach to a problem involving fractional laplacian operators is given in \cite{LMS18}.

Let us illustrate the strategy in our case: given $u_0\in\dot H^{s}(\mathbb{R}^d)\cap\PP_2(\mathbb{R}^d)$
we introduce the following time discretization scheme: we consider 
a time step $\tau>0$,
% uniform partition of size $\tau$ of the time interval $[0,+\infty)$ and we let
we set $u_\tau^0:=u_0$ and we recursively define
\begin{equation}\label{minmov1}
u_\tau^k\in{\rm Argmin}_{u\in\PP_2(\mathbb{R}^d)}\left\{ \FF_s(u)+\frac1{2\tau}\,W^2(u,u^{k-1}_\tau)\right\}, \qquad \mbox{for } k=1,2,\ldots.
\end{equation}
The existence and uniqueness of solution for the minimization problem in \eqref{minmov1} will be established in Subsection \ref{subsection:MM}. 
If $\{u_\tau^k\}_{k\in\mathbb{N}}\subset\PP_2(\mathbb{R}^d)$ is a sequence defined by \eqref{minmov1}, 
we introduce the piecewise constant interpolation
\begin{equation}\label{PC1}
	u_\tau(t):=u_\tau^{k}, \qquad \mbox{if } t\in ((k-1)\tau,k\tau], \quad k=1,2,\ldots, \qquad u_\tau(0):=u_\tau^{0}=u_0,
\end{equation}
%$$
%u_\tau(t):=u^{\lceil t/\tau\rceil}_{{\RRR\tau}},\qquad t\in [0,+\infty),
%$$
%where $\lceil a\rceil:=\min\{m\in \mathbb{N}:m>a\}$ is the upper integer part of the real number $a$. 
We refer to $u_\tau$ as discrete solution.
The family of piecewise constant curves $\{u_\tau: \tau>0\}$ admits a limit curve, in a suitable sense, as $\tau\to 0$
and a limit curve is a weak solution of the equation in \eqref{equation}.

 We state the results in the following Theorem \ref{th:main}.

%\subsection*{The main result}
%We shall now state the results.  The main one  is the following Theorem \ref{th:main}, which contains all the properties of the gradient flow solutions.
%Throughout the paper we denote by $\HH:\PP_2(\R^d)\to (-\infty,+\infty]$
%the entropy defined by $\HH(u):=\int_{\R^d}u\log u \,\d x$ if $u$ is absolutely continuous with respect to the Lebesgue measure and
%$\HH(u)=+\infty$ otherwise.
%We use the notation $D(\HH)=\{u\in\PP_2(\R^d):\HH(u)<+\infty\}$ for the domain of $\HH$.
Before state the Theorem we point out that the space $AC^2([0,+\infty);(\PP_2(\mathbb{R}^d),W))$
is defined in Section \ref{subsection:MM}.
Moreover, we denote by $[a]:=\max\{n\in\Z:n\leq a\}$, the integer part of the real number $a$.

\begin{theorem}\label{th:main}
Let {\RRR $d\ge 1$, $s>0$}  and $u_0\in \dot H^{s}(\mathbb{R}^d)\cap\PP_2(\mathbb{R}^d)$.
Then the following assertions hold:
\begin{itemize}
\item[i)] {\bf Existence and uniqueness of discrete solutions.} For any $\tau>0$,  there exists a unique sequence
$\{u_\tau^k:k=0,1,2,\ldots\}$ satisfying \eqref{minmov1}. In particular the discrete solution 
$u_\tau:[0,+\infty)\to\PP_2(\R^d)$ in \eqref{PC1} is uniquely defined.
\item[ii)] {\bf Convergence and regularity.} For any vanishing sequence $\tau_n$ 
there exists a (non relabeled) subsequence  $\tau_{n}$ and a curve
$u\in AC^2([0,+\infty);(\PP_2(\mathbb{R}^d),W))$ such that:
\begin{enumerate}
\item $\forall\,r\in[0,s)$,  $u\in C([0,+\infty);H^r(\R^d))$ and
\[
 u_{\tau_{n}}(t) \to u(t) \quad \mbox{strongly in $H^r(\R^d)$ as $n\to\infty$, $\forall\,t\in[0,+\infty)$,}
\]
\item
$u\in C([0,+\infty);H^s_w(\R^d))$, where $H^s_w(\R^d)$ denotes the space $H^s(\R^d)$ endowed with the weak topology, and
\[
 u_{\tau_{n}}(t) \to u(t) \quad \mbox{weakly in $H^s(\R^d)$ as $n\to\infty$, $\forall\,t\in[0,+\infty)$,}
\]
\item
 $u\in L^2((0,T); H^{1+s}(\R^d))$ for every $T>0$, and
\[
 u_{\tau_{n}}\to u \quad \mbox{strongly in } L^2((0,T);H^{1+r}({\R}^d)) \quad \mbox{ as $n\to\infty$, $\forall\,r\in[0,s)$,}
\]
\[
 u_{\tau_{n}}\to u \quad \mbox{weakly in } L^2((0,T);H^{1+s}({\R}^d)) \quad \mbox{ as }n\to\infty.
\]
\end{enumerate}
\item[iii)] {\bf Solution of the equation.} $u$ satisfies the equation in \eqref{equation} in the following {\RRR weak form:}
\begin{equation}\label{wequation}
\int_0^{+\infty}\int_{\R^d} u \,\partial_t\varphi  \,\d x\,\d t +
\int_0^{+\infty}N(u(t,\cdot),\nabla\varphi(t,\cdot)) \, \d t=0 , \quad  \forall\, \varphi\in C^\infty_c((0,+\infty)\times\R^d),
\end{equation}
where $N:H^{1+s}(\R^d)\times  C^\infty_c(\R^d;\R^d)\to \R$ is defined by
\begin{equation}\label{N11}
	N(v,\eta):=\begin{cases}
	\displaystyle \int_{\R^d} ((\LL_{s-m} v) \LL_m(\div (\eta \, v))\,\d x, & \text{if } s\in[2m,2m+1], \\
	\displaystyle \int_{\R^d} \nabla((\LL_{s-m-1} v)\cdot\nabla(\LL_m(\div (\eta \, v)))\,\d x, & \text{if } s\in(2m+1,2m+2),
	\end{cases}
\end{equation}
and $m:=[s/2]$ is the integer part of $s/2$.  
\end{itemize}
\end{theorem}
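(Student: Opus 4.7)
I would establish the three assertions in order, using the direct method for (i), abstract minimizing-movement theory for (ii), and the discrete Euler--Lagrange equation for (iii). For (i), apply the direct method to $\Phi_{\tau,\nu}(u) := \FF_s(u) + (2\tau)^{-1}W^2(u, \nu)$ with $\nu = u_\tau^{k-1}$. A minimizing sequence has uniformly bounded second moments (from the $W^2$ control against $\nu$) and is therefore tight. Narrow lower semicontinuity of $W^2(\cdot,\nu)$ is classical, and $\FF_s$ is narrowly lower semicontinuous by pointwise convergence of Fourier transforms plus Fatou in Fourier space. For uniqueness, $\FF_s$ is strictly convex along the linear interpolation $u_t = (1-t)u_0 + tu_1$ (as the square of a seminorm) and $W^2(\cdot,\nu)$ is convex along the same interpolation (convex combinations of optimal plans are admissible couplings), so $\Phi_{\tau,\nu}$ is strictly convex on $\PP_2(\R^d)$.

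For (ii), iterating the optimality inequality gives $\FF_s(u_\tau^k) \leq \FF_s(u_0)$ and $\sum_k W^2(u_\tau^k, u_\tau^{k-1})/(2\tau) \leq \FF_s(u_0)$, whence $u_\tau$ is equi-H\"older of exponent $1/2$ in $(\PP_2(\R^d),W)$. A metric Ascoli argument plus diagonal extraction produces a limit $u \in AC^2([0,+\infty);(\PP_2(\R^d),W))$ with pointwise-in-time narrow convergence along a subsequence. The uniform $\dot H^s$ bound upgrades narrow to weak $H^s$-convergence at every $t$ (assertion (2)); combined with Rellich compactness on balls and the uniform second-moment tightness at infinity, this improves to strong $H^r(\R^d)$ convergence for $r \in [0,s)$ (assertion (1)). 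Assertion (3) requires a further higher-order estimate: using the discrete Euler--Lagrange from (iii), the discrete velocity is identified with $-\nabla \LL_s u_\tau^k$ in the $u_\tau^k$-weighted sense, so the metric slope bound becomes $\int_0^T\!\!\int u_\tau |\nabla \LL_s u_\tau|^2\, dx\, dt \leq C$. Combining this with the uniform $L^\infty(0,T;H^s)$ bound via Fourier/Sobolev manipulations yields an unweighted $L^2(0,T;H^{1+s})$ bound, from which weak $L^2 H^{1+s}$ convergence follows, and strong $L^2 H^{1+r}$ convergence for $r<s$ follows by an Aubin--Lions argument in which the Wasserstein modulus provides negative-Sobolev time continuity.

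For (iii), I derive the discrete Euler--Lagrange by horizontal perturbations: for $\zeta \in C_c^\infty(\R^d;\R^d)$ and $T_\varepsilon(x) := x + \varepsilon\zeta(x)$, differentiating $\Phi_{\tau,u_\tau^{k-1}}((T_\varepsilon)_\# u_\tau^k)$ at $\varepsilon = 0$ yields
\[
\int_{\R^d} \LL_s u_\tau^k\, \div(\zeta\, u_\tau^k)\, dx + \frac{1}{\tau}\!\int_{\R^d\times\R^d}\!(x-y)\cdot\zeta(x)\, d\gamma_\tau^k(x,y) = 0,
\]
where $\gamma_\tau^k$ is the optimal plan from $u_\tau^{k-1}$ to $u_\tau^k$. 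Taking $\zeta = \nabla \varphi(k\tau,\cdot)$, summing against a time-discretization of $\varphi$ and performing a discrete integration by parts in time produces a discrete analog of \eqref{wequation}. The spatial integral is recast using the Fourier factorizations $|\xi|^{2s} = |\xi|^{2(s-m)}|\xi|^{2m}$ (for $s\in[2m,2m+1]$) and $|\xi|^{2s} = |\xi|^{2(s-m-1)}|\xi|^{2m}|\xi|^2$ (for $s\in(2m+1,2m+2)$), which symmetrically distribute the $2+2s$ spatial derivatives between $u_\tau^k$ and $\varphi$, producing exactly $N(u_\tau^k, \nabla\varphi)$ as in \eqref{N11}. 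The convergences from (ii) then allow the bilinear $N$ to pass to its limit, and the Wasserstein time-continuity handles the $\partial_t$ term.

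The main obstacle is precisely the higher-order estimate in assertion (3): the metric gradient-flow structure natively provides only the $u$-weighted dissipation $\int u|\nabla \LL_s u|^2$, but converting this to the unweighted $L^2(0,T;H^{1+s})$ bound required to interpret and pass the nonlinear term $N$ to its limit is delicate because $u$ need not be bounded below. The definition \eqref{N11} of $N$ is tailored exactly so that $H^{1+s}$ regularity suffices, and the proof of (3) is the reconciliation between what the minimizing-movement scheme provides and what the weak formulation \eqref{wequation} demands.
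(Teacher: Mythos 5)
Your parts (i) and the soft-compactness portion of (ii) match the paper's argument (direct method, strict convexity along linear interpolation, the basic estimate $\sum_k W^2(u_\tau^k,u_\tau^{k-1})/2\tau\le \FF_s(u_0)$, a refined Ascoli argument, and the upgrade from narrow to $H^r$ convergence using the uniform $\dot H^s$ bound --- the paper does this last step purely in Fourier variables rather than via Rellich, but that is a cosmetic difference). The genuine problem is the step you yourself flag as ``the main obstacle'': the unweighted $L^2(0,T;H^{1+s})$ bound. You propose to extract it from the weighted dissipation $\int_0^T\!\!\int u_\tau|\nabla\LL_s u_\tau|^2\,\d x\,\d t\le C$ ``via Fourier/Sobolev manipulations,'' but no such manipulation exists in general: since $u$ can vanish (and does, for compactly supported or decaying data), the weighted quantity controls nothing on the set where $u$ is small, and the $L^\infty_t H^s_x$ bound cannot compensate for a full extra derivative. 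As written, assertion (ii)(3) --- and with it the entire passage to the limit in the nonlinear term $N$, hence part (iii) --- is not proved. Moreover, your Euler--Lagrange computation in (iii) already presupposes $u_\tau^k\in H^{1+s}$ in order for $\int\LL_s u_\tau^k\,\div(\zeta u_\tau^k)\,\d x$ to make sense, so the gap is not confined to one convergence statement; it undercuts the derivation of the discrete weak formulation itself.

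The missing idea is the \emph{flow interchange} of Matthes--McCann--Savar\'e, which is how the paper gets $H^{1+s}$ regularity without ever touching the weighted dissipation. One perturbs the minimizer $u_\tau^k$ along the heat semigroup $S_t$ (the $0$-flow of the entropy $\HH(u)=\int u\log u\,\d x$) and computes, in Fourier variables, $\frac{\d}{\d t}\FF_s(S_t u_\tau^k)=-\|S_t u_\tau^k\|^2_{\dot H^{1+s}}$; the minimality of $u_\tau^k$ combined with the evolution variational inequality for $S_t$ then yields
\begin{equation*}
\|u_\tau^k\|^2_{\dot H^{1+s}(\R^d)}\le \frac{\HH(u_\tau^{k-1})-\HH(u_\tau^k)}{\tau},
\end{equation*}
which telescopes in $k$; the right-hand side is bounded by $\HH(u_0)$ plus the Carleman-type lower bound $\HH(u)\ge -C(1+\int|x|^2 u)$ and the second-moment estimate you already have. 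This gives the unweighted $L^2(0,T;\dot H^{1+s})$ bound directly, and then your interpolation/Aubin--Lions step for the strong $L^2(0,T;H^{1+r})$ convergence goes through. For part (iii) the paper uses the same flow-interchange device with the potential $\VV(u)=\int\varphi\,\d u$ (a $(-\lambda)$-flow with $\lambda\ge\|D^2\varphi\|_\infty$), obtaining a two-sided inequality with error $\tfrac{\lambda}{2}W^2(u_\tau^k,u_\tau^{k-1})$ in place of your exact Euler--Lagrange identity with the optimal plan; your route is viable there too, but only after the regularity issue is repaired.
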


We provide some comments about the statement of the Theorem.

The existence of the discrete solutions and a limit curve follows by a standard argument in the general theory of minimizing movements.
This fact is illustrated in Section \ref{Sec:functional}. 

The minimizers in \eqref{minmov1}, and consequently $u_\tau(t)$ for $t\geq0$, belong to $H^s(\R^d)$ by construction. 
This regularity and a bound on the $H^s$ norms allow to obtain the convergence in points  $ii) (1) (2)$.
The regularity $H^s$ of $u(t)$ is not sufficient to give a good notion of weak solution of the equation in  \eqref{equation}.
If the limit curve $u$ belong to $L^2((0,T); H^{1+s}(\R^d))$ we can give a good notion of weak solution of the equation
in \eqref{equation}, precisely \eqref{wequation}. Indeed the nonlinear operator $N$ in \eqref{N11} is well defined.
In order to show that minimizers has the further regularity $H^{s+1}$, we use variations along the Heat flow using the flow interchange technique
stated in \cite{MMS09}.
This further regularity allows to make variations along the flow generated by a smooth vector field,
that yield a discrete weak formulation of the equation.
A suitable bound in the space $L^2((0,T); H^{1+s}(\R^d))$ allows to obtain the convergence in  point  $ii) (3)$
 and to pass to the limit in the weak discrete formulation of the equation.

Finally we observe that the initial datum $u_0$ of the problem \eqref{equation} is attained in the sense of the continuity 
at the points $ii) (1) (2)$.

\medskip
We conclude the introduction  pointing out that for Thin film models it is interesting also a system of the type
\begin{equation}\label{equationMob}
\begin{cases}
\partial_t u-\mathrm{div}(m(u)\nabla (\LL_s u) )=0 &\mbox{in }(0,+\infty)\times \R^d , \\
%(-\Delta)^sv=u &\mbox{in }\mathbb{R}^d\times(0,+\infty),\\
u(0,\cdot)=u_0 &\mbox{in }{\R}^d,
\end{cases}
\end{equation}
where $m:[0,+\infty)\to[0,+\infty)$ is the so-called mobility function. 
In the case of concave non linear mobilities $m$ the  system \eqref{equationMob} 
is formally a gradient flow in the space of Borel probability measures endowed with a mobility dependent distance 
introduced in \cite{DNS09} (see also \cite{LM10}).
This technique was used in \cite{LMS12} for the Cahn-Hilliard equation.

\section{Notation and preliminary results} %\label{Sec:notation}
\subsection{Probability measures and Wasserstein distance}\label{Sub:Wasserstein}
For a detailed treatment of this topic see \cite{AGS} and \cite{V03}.
We denote by $\PP(\mathbb{R}^d)$ the set of Borel probability measures on $\mathbb{R}^d$.
The narrow convergence in $\PP(\mathbb{R}^d)$ is defined in duality with continuous and bounded functions on $\mathbb{R}^d$.
Precisely,
a sequence $\{u_n\}_{n\in\N}\subset \PP(\R^d)$ narrowly converges to $u\in  \PP(\R^d)$ if
$\int_{\R^d} \phi \, \d u_n \to \int_{\R^d} \phi \, \d u$ as $n\to+\infty$ for every $\phi\in C_b(\R^d)$,
where  $C_b(\R^d)$ is the set of continuous and bounded real functions on $\R^d$.

Given $u\in\PP(\R^k)$ and $G:\R^k\to \R^n$ a Borel measurable map, 
we define the push forward (or image measure) of $u$ through $G$, denoted by $G_\# u\in\PP(\R^n)$, by 
$G_\# u(B):=u(G^{-1}(B))$ for all Borel set
$B\subset \R^n$, or equivalently,
\[\int_{\R^n} f(y)\,\d G_\#u(y)=\int_{\R^k} f(G(x))\,\d u(x),\]
for every Borel positive function $f:\R^k\to\R$. 

Since in this paper we use only measures $u\in\PP(\R^d)$
 absolutely continuous with respect to the Lebesgue measure, we identify the measure $u$ with its density, 
 and with abuse of notation we write $\d u(x)=u(x)\d x$.
 
 We also recall that, when $u\in\PP(\R^d)$ is absolutely continuous with respect to the Lebesgue measure
 and $G:\R^d\to\R^d$ is a diffeomorphism, then $v:=G_\#u$ 
 is absolutely continuous with respect to the Lebesgue measure and
 \begin{equation}\label{densityPF}
  v(x)= u(G^{-1}(x))\det(\nabla G^{-1}(x)).
 \end{equation}

 We define $\PP_2(\R^d):=\{u\in\PP(\R^d):  \int_{\R^d} |x|^2 \, \d u(x)<+\infty \}$
the set of Borel probability measures with finite second moment.
The Wasserstein distance {\RRR W} in $\PP_2(\R^d)$ is defined as
\begin{equation}\label{Kanto}
W(u,v):=\min_{\gamma\in\PP({\R^d\times\R^d})}\left\{\left(\int_{\R^d\times\R^d}\!\!\!\!\!|x-y|^2\,\d\gamma(x,y)\right)^{1/2}:
\,(\pi_1)_\#\gamma=u,\,(\pi_2)_\#\gamma=v
\right\}
\end{equation}
where $\pi_i$, $i=1,2$, denote the canonical projections on the first and second factor respectively.
%Denoting by $\id$ the identity map in $\R^d$, when $u$ is absolutely continuous with respect to the Lebesgue measure, 
%the minimum problem~\eqref{Kanto}
%has a unique solution $\gamma$ induced by a transport map $T_u^v$ in the following way:
%$\gamma=(\id,T_u^v)_\#u$.
%In particular, $T_u^v$ is the unique solution of the Monge optimal transport problem
%$$
%\min_{S:\R^d\to\R^d}\left\{\int_{\R^d}|S(x)-x|^2\d u(x):\ S_\#u=v\right\}.
%$$
%%of which~\eqref{Kanto} is the Kantorovich relaxed version.
% Finally, we recall that if also
%$v$ is absolutely continuous with respect to Lebesgue measure, then
%\begin{equation*}\label{ot}
%T_v^u\circ T_u^v=\id\quad\hbox{\rm $u$-a.e.}
%\quad\hbox{\rm and}\quad
%T_u^v\circ T_v^u=\id\quad\hbox{\rm $v$-a.e.}
%\end{equation*}

The function $W:\PP_2(\R^d)\times\PP_2(\R^d)\to[0,+\infty)$ is a distance and the metric space $(\PP_2(\R^d),W)$ is complete and separable.
The distance $W$ is sequentially lower semi continuous with respect to the narrow convergence, i.e.,
\begin{equation}\label{Wlsc}
 u_n\to u,\quad v_n\to v, \mbox{ narrowly } \Longrightarrow \liminf_{n\to+\infty}W(u_n,v_n)\geq W(u,v),
\end{equation}
and 
\begin{equation}\label{Wcomp}
\mbox{bounded sets in $(\PP_2(\R^d),W)$ are narrowly sequentially relatively compact.}
\end{equation}

\subsection{Fourier transform and fractional Sobolev spaces}\label{SobolevHomSpace}
We denote by $\SS(\R^d)$ the Schwartz space of smooth functions with rapid decay at infinity and by $\SS'(\R^d)$ the dual space of tempered distributions.
The Fourier transform of $u\in \SS(\R^d)$ is defined by 
$ \hat u(\xi):= \int_{\mathbb{R}^d}e^{-ix\cdot\xi}u(x)\,\d x. $
The Fourier transform is an automorphism of $\SS(\R^d)$ and can be defined on $\SS'(\R^d)$ by transposition.
Moreover the  Plancherel formula holds
\begin{equation}\label{Plancherel}
 \int_{\R^d} \hat u(\xi)\overline{\hat w(\xi)}\,\d\xi =  (2\pi)^d\int_{\R^d} u(x) w(x)\,\d x,\qquad \forall u,w\in L^2(\R^d).
 \end{equation}
We observe that if $u$ is real valued, then
\begin{equation}\label{Fconj}
	\overline{\hat u(\xi)}=\hat u(-\xi) \qquad \forall\,\xi\in\R^d.
\end{equation}
Moreover we recall the link between the Fourier transform and the differentiation, valid for tempered distributions,
\begin{equation}\label{Frule}
\widehat{\de_{x_k}u}(\xi)=i\xi_k\hat u(\xi), \qquad u\in\SS'(\R^d).
\end{equation}

Let $r\in\R$.
For every tempered distribution $u\in\SS'(\R^d)$ such that $\hat u \in L^1_{loc}(\R^d)$,
we define
\begin{equation*}
\|u \|^2_{H^{r}(\R^d)} := \frac1{(2\pi)^d}\int_{\mathbb{R}^d}(1+|\xi|^2)^{r}|\hat u (\xi)|^2\,\d\xi
\end{equation*}
and
\[
\|u \|^2_{\dot H^{r}(\R^d)} := \frac1{(2\pi)^d}\int_{\mathbb{R}^d}|\xi|^{2r}|\hat u (\xi)|^2\,\d\xi.
\]
By \eqref{Plancherel} and \eqref{Frule} it holds
$$ 
\|u \|^2_{H^{1}(\R^d)} = \int_{\mathbb{R}^d}(|u (x)|^2+|\nabla u(x)|^2)\,\d x, 
\qquad \|u \|^2_{\dot H^{1}(\R^d)} = \int_{\mathbb{R}^d}|\nabla u(x)|^2\,\d x.
$$
The fractional Sobolev space $H^{r}(\R^d)$ is defined by
$$H^{r}(\R^d):=\{u\in\SS'(\R^d): \hat u \in L^1_{loc}(\R^d) ,\; \|u \|_{H^{r}(\R^d)}<+\infty \},$$
and the homogenous fractional Sobolev space  $\dot H^{r}(\R^d)$ is defined by 
$$\dot H^{r}(\R^d):=\{u\in\SS'(\R^d): \hat u \in L^1_{loc}(\R^d) ,\; \|u \|_{\dot H^{r}(\R^d)}<+\infty \}.$$
Using Plancherel's formula \eqref{Plancherel} and the definition of $\LL_s$ in \eqref{defLs} 
it is immediate to show that 
\begin{equation}
	 \|u\|^2_{\dot H^{r}(\R^d)}=\|\LL_{r/2}u\|^2_{L^{2}(\R^d)}, \qquad \forall\, r>0.
\end{equation}
Moreover, it follows from the definition of $\LL_s$ that
\begin{equation}
	 u\in{\dot H^{r}(\R^d)} \quad\Longrightarrow \quad \LL_{s}u\in{\dot H^{r-2s}(\R^d)}.
\end{equation}
In this paper we use  the following obvious relations: 
$$ \|u \|_{H^{r_1}(\R^d)} \leq \|u \|_{H^{r_2}(\R^d)},\qquad \mbox{if } r_1<r_2,$$
$$\|u \|_{\dot H^{r}(\R^d)} \leq \|u \|_{H^{r}(\R^d)}, \qquad \mbox{if }r>0,$$
$$\|u \|_{\dot H^{0}(\R^d)} = \|u \|_{H^{0}(\R^d)}= \|u \|_{L^{2}(\R^d)},$$
and the interpolation inequalities
\begin{equation}\label{interpolation}
\begin{split}
&\|u \|_{H^{r_1}(\R^d)}\leq \|u \|^{1-\theta}_{H^{r_0}(\R^d)} \|u \|^\theta_{H^{r_2}(\R^d)}\quad\text{and}
\quad \|u \|_{\dot H^{r_1}(\R^d)}\leq \|u \|^{1-\theta}_{\dot H^{r_0}(\R^d)} \|u \|^\theta_{\dot H^{r_2}(\R^d)},\\
 &\mbox{if } r_0<r_1<r_2 \mbox{ and $\theta$ satisfies } r_1=(1-\theta)r_0+\theta r_2,
\end{split}
\end{equation}
(see for instance \cite[Sections 1.3, 1.4]{BCD}).

%
%\begin{proposition}\label{prop:Sobolev}
%The following assertions hold.
%\begin{itemize}
%
%\item 
%Interpolation. If $r_0<r_1<r_2$ then
%$$ \|u \|_{H^{r_1}(\R^d)}\leq \|u \|^{1-\theta}_{H^{r_0}(\R^d)} \|u \|^\theta_{H^{r_2}(\R^d)}\quad\text{and}
%\quad \|u \|_{\dot H^{r_1}(\R^d)}\leq \|u \|^{1-\theta}_{\dot H^{r_0}(\R^d)} \|u \|^\theta_{\dot H^{r_2}(\R^d)}, $$
%where $\theta$ is defined by $r_1=(1-\theta)r_0+\theta r_2$.
%\item
%If $r_1<r_2$ then $\|u \|_{H^{r_1}(\R^d)} \leq \|u \|_{H^{r_2}(\R^d)}$.\\
%If $r>0$ then  $\|u \|_{\dot H^{r}(\R^d)} \leq \|u \|_{H^{r}(\R^d)}$.\\
%If $r<0$ then  $\|u \|_{H^{r}(\R^d)} \leq \|u \|_{\dot H^{r}(\R^d)}$.\\
%If $r=0$ then  $\|u \|_{\dot H^{0}(\R^d)} = \|u \|_{H^{0}(\R^d)}= \|u \|_{L^{2}(\R^d)}$.
%\item
%If $\phi\in \SS(\R^d)$ and $u\in H^r(\R^d)$ then there exists a constant $c$, depending only on $\phi$, $r$ and $d$, such that
%$$\|\phi \, u \|_{H^{r}(\R^d)} \leq c\|u \|_{H^{r}(\R^d)}.$$
%\item
%If $\phi\in\SS(\R^d)$,  $r_1<r_2$ and $\sup_{n\in\N}\|u_n \|_{H^{r_2}(\R^d)}<+\infty$, then
%$\{\phi \, u_n:n\in\N  \}$ is relatively compact in $H^{r_1}(\R^d)$.
%\end{itemize}
%\end{proposition}

The following lemma and its Corollary will be useful in the sequel for proving convergence results.

\begin{lemma}\label{lemma:conv}
Let $s>0$ and $\{u_n\}_{n\in\N}\subset H^{s}({\R}^d)$ a sequence such that
$\sup_{n\in\N}\|u_n\|_{H^s(\R^d)}<+\infty$
and 
\begin{equation}\label{eq:BFT}
	\sup_{n\in\N}\sup_{\xi\in B_R(0)}|\hat u_n(\xi)| <+\infty, \qquad \forall\,R>0.
\end{equation}
If $u$ is a Borel signed measure such that
 $\hat u_n(\xi) \to \hat u(\xi)$ for every $\xi\in\R^d$, as $n\to+\infty$,
 then $u\in H^s(\R^d)$,
 $\|u_n-u\|_{H^r(\R^d)}\to 0$ as $n\to+\infty$,
  for any $r\in[0,s)$
  and $u_n\to u$ weakly in $H^s(\R^d)$ as $n\to+\infty$.

Moreover, if $h\in [0,s/2)$, then
$\|\LL_hu_n-\LL_hu\|_{H^r(\R^d)}\to 0$ for any $r\in[0,s-2h)$
and $\LL_hu_n\to \LL_hu$ weakly in $H^{s-2h}(\R^d)$.
Finally,  $\LL_{s/2}u_n\to \LL_{s/2}u$ weakly in $L^{2}(\R^d)$.
\end{lemma}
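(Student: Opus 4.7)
The plan is to prove the statements in the order they appear, treating the first block (for $\{u_n\}$ itself) and then reducing the $\LL_h$ statements to the first block by replacing $u_n$ with $v_n:=\LL_h u_n$.

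First I would establish that $u\in H^s(\R^d)$: since $\hat u_n(\xi)\to\hat u(\xi)$ pointwise, Fatou's lemma applied to $(1+|\xi|^2)^s|\hat u_n(\xi)|^2$ gives $\|u\|_{H^s(\R^d)}^2\le\liminf_n\|u_n\|_{H^s(\R^d)}^2<+\infty$. Next, since $H^s(\R^d)$ is a Hilbert space and $\{u_n\}$ is norm-bounded, a subsequence converges weakly to some $w\in H^s(\R^d)$. Testing against functions in $\SS(\R^d)$ and using Plancherel's formula, pointwise convergence of the Fourier transforms together with the locally uniform bound \eqref{eq:BFT} (for a dominant on compact sets) and the $H^s$ bound (for a dominant on the tail) force $\hat w=\hat u$, hence $w=u$. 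Uniqueness of the limit then upgrades this to weak convergence of the full sequence $u_n\rightharpoonup u$ in $H^s(\R^d)$.

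For the strong convergence in $H^r(\R^d)$ with $r\in[0,s)$, I would split the integral defining $\|u_n-u\|_{H^r(\R^d)}^2$ at radius $R$. On the tail I would use
\[
\int_{|\xi|>R}(1+|\xi|^2)^r|\hat u_n(\xi)-\hat u(\xi)|^2\,\d\xi\le (1+R^2)^{r-s}\,\|u_n-u\|_{H^s(\R^d)}^2\le C(1+R^2)^{r-s},
\]
which can be made arbitrarily small by choosing $R$ large, because $r<s$ and $\|u_n\|_{H^s(\R^d)}$ is bounded. On the ball $B_R(0)$, assumption \eqref{eq:BFT} (which transfers to $\hat u$ by pointwise passage to the limit) supplies an integrable dominant $4C_R^2(1+|\xi|^2)^r\un_{B_R(0)}$, so dominated convergence applied together with $\hat u_n\to\hat u$ pointwise gives the ball integral $\to 0$.

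For the second block, set $v_n:=\LL_h u_n$ so that $\hat v_n(\xi)=|\xi|^{2h}\hat u_n(\xi)$. The inequality $(1+|\xi|^2)^{s-2h}|\xi|^{4h}\le C_{s,h}(1+|\xi|^2)^s$ (immediate by splitting into $|\xi|\le 1$ and $|\xi|\ge 1$) gives $\|v_n\|_{H^{s-2h}(\R^d)}\le C\|u_n\|_{H^s(\R^d)}$, which is uniformly bounded. Likewise $|\hat v_n(\xi)|\le R^{2h}\sup_{B_R(0)}|\hat u_n|$ on $B_R(0)$, so \eqref{eq:BFT} holds for $\{v_n\}$, and $\hat v_n(\xi)\to|\xi|^{2h}\hat u(\xi)=\widehat{\LL_h u}(\xi)$ pointwise. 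Applying the already proved part of the lemma to $\{v_n\}$ with exponent $s-2h>0$ yields the claimed strong convergence in $H^r(\R^d)$ for $r<s-2h$ and weak convergence in $H^{s-2h}(\R^d)$. Finally, the case $h=s/2$ is even easier: $\|\LL_{s/2}u_n\|_{L^2(\R^d)}=\|u_n\|_{\dot H^s(\R^d)}$ is bounded, and pointwise convergence of the Fourier transforms plus $L^2$ reflexivity identify the unique weak $L^2$ limit as $\LL_{s/2}u$.

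The only mildly delicate point is the identification of the weak $H^s$ limit via pointwise Fourier convergence; the rest is a standard tail-plus-core splitting. Assumption \eqref{eq:BFT} is crucial precisely at this step, because without it one cannot control $\hat u_n$ near $\xi=0$ where the factor $|\xi|^{2s}$ does not help, and one needs a dominant to pass pointwise convergence of $\hat u_n$ to the weak limit statement.
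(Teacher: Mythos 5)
Your proof is correct and follows essentially the same route as the paper: Fatou/weak compactness plus identification of the limit via pointwise Fourier convergence, the tail-versus-ball splitting with the $(1+R^2)^{r-s}$ tail bound and dominated convergence on $B_R(0)$ using \eqref{eq:BFT}, and reduction of the $\LL_h$ statements to the norm relation $\|\LL_h v\|_{H^{r}(\R^d)}\le C\|v\|_{H^{r+2h}(\R^d)}$. Your version is slightly more explicit than the paper's (which phrases the weak-limit identification through the auxiliary functions $U_n(\xi)=(1+|\xi|^2)^{s/2}\hat u_n(\xi)$ in $L^2$ and dispatches the $\LL_h$ claims in one line), but the substance is identical.
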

\begin{proof}
We first prove that $u\in H^s(\R^d)$ and $u_n\to u$ weakly in $H^s(\R^d)$.\\
We define $U_n(\xi):=(1+|\xi|^2)^{s/2}\hat u_n(\xi)$.
By assumption we have
 $\sup_n\|U_n\|_{L^2(\R^d)}<+\infty$
and
$U_n(\xi)\to (1+|\xi|^2)^{s/2}\hat u (\xi)$ for every $\xi\in\R^d$ as $n\to+\infty$.
By the $L^2$ weak compactness there exists a subsequence of $\{U_n\}$ weakly convergent in $L^2(\R^d)$ 
to some $U\in L^2(\R^d)$.
By uniqueness of the weak limit we have that 
$U(\xi)=(1+|\xi|^2)^{s/2}\hat u(\xi)$.
By the lower semicontinuity of the $L^2$ norm we obtain that
 $\|u\|_{H^s(\R^d)} \le\liminf_{n\to\infty}\|u_n\|_{H^s(\R^d)}$.
 Since the weak topology is metrizable in bounded sets and
 the limit is independent of the subsequence, all the sequence $u_n$ converges weakly in $H^s(\R^d)$.

Let us fix $r\in [0,s)$ and we prove that $u_n$ strongly converges to $u$ in $H^r(\R^d)$.\\
 For any $R>0$ we write
\begin{equation}\label{ciccia0}
\begin{split}
	\|u_n-u\|^2_{ H^r(\R^d)}&=\int_{\R^d}(1+|\xi|^2)^{r}|\hat u_n(\xi)-\hat u(\xi)|^2\,\d\xi \\
	&=\int_{\{|\xi|\leq R\}}(1+|\xi|^2)^{r}|\hat u_n(\xi)-\hat u(\xi)|^2\,\d\xi \\
	&+ \int_{\{|\xi|>R\}}(1+|\xi|^2)^{r}|\hat u_n(\xi)-\hat u(\xi)|^2\,\d\xi .
\end{split}\end{equation}
Let $C$ be a constant such that $\|u_n\|^2_{H^s(\R^d)}\leq C$. Since, as observed in the first part of the proof, also 
$\|u\|^2_{H^s(\R^d)}\leq C$, we have
\begin{equation}\begin{split}\label{ciccia1}
		\int_{\{|\xi|>R\}}(1+|\xi|^2)^{r}|\hat u_n(\xi)-\hat u(\xi)|^2\,\d\xi &= \int_{\{|\xi|>R\}}(1+|\xi|^2)^{s}(1+|\xi|^2)^{(r-s)}|\hat u_n(\xi)-\hat u(\xi)|^2\,\d\xi \\
		&\leq (1+R^2)^{(r-s)}\int_{\{|\xi|>R\}}(1+|\xi|^2)^{s}|\hat u_n(\xi)-\hat u(\xi)|^2\,\d\xi\\
		 &\leq (1+R^2)^{(r-s)}\|u_n-u\|^2_{H^s(\R^d)} 
		 \leq 4C (1+R^2)^{(r-s)}.
	\end{split}\end{equation}
On the other hand, since $(1+|\xi|^2)^{r}|\hat u_n(\xi)-\hat u(\xi)|^2\to 0$ for any $\xi\in\R^d$ and by \eqref{eq:BFT}
 $(1+|\xi|^2)^{r}|\hat u_n(\xi)-\hat u(\xi)|^2\leq(1+R^2)^{r}C_R^2$, where $C_R:=\sup_{n\in\N}\sup_{\xi\in B_R(0)}|\hat u_n(\xi)|$, by dominated convergence Theorem we have
\begin{equation}\label{ciccia2}
 \lim_{n\to+\infty}\int_{\{|\xi|\leq R\}}(1+|\xi|^2)^{r}|\hat u_n(\xi)-\hat u(\xi)|^2\,\d\xi =0.
 \end{equation}
Fixing $\eps>0$, by \eqref{ciccia1} there exists $R$ such that  $\int_{\{|\xi|>R\}}(1+|\xi|^2)^{r}|\hat u_n(\xi)-\hat u(\xi)|^2\,\d\xi <\eps$
for any $n\in\N$. Then, by \eqref{ciccia0}, \eqref{ciccia1} and \eqref{ciccia2} we obtain that
 $\limsup_{n\to+\infty} \|u_n-u\|^2_{ H^r(\R^d)}\leq\eps$.
For the arbitrariness of $\eps$ we conclude.

The last assertions are consequence of the proved convergences and
the following relation 
$$\|\LL_hu_n\|_{\dot H^r(\R^d)}=\|u_n\|_{\dot H^{r+2h}(\R^d)} \leq \|u_n\|_{H^{r+2h}(\R^d)}.$$

\end{proof}

\begin{corollary}\label{cor:conv}
Let $s>0$ and
$\{u_n\}_{n\in\N}\subset \dot H^{s}({\R}^d)\cap\PP({\R}^d)$ a sequence 
such that $\sup_{n\in\N}\|u_n\|_{\dot H^s(\R^d)}<+\infty$.
If $u\in\PP(\R^d)$ and $u_n$ narrowly converges to $u$,
then $u_n, u\in H^s(\R^d)$,
and all the conclusions of Lemma \ref{lemma:conv} hold.
%$u_n\to u$ weakly in $H^s(\R^d)$ and
% $\|u_n-u\|_{H^r(\R^d)}\to 0$ as $n\to+\infty$,
%  for any $r\in[0,s)$
%  and $u_n\to u$ weakly in $H^s(\R^d)$ as $n\to+\infty$.
%
%Moreover, if $h\in [0,s/2)$, then
%$\|\LL_hu_n-\LL_hu\|_{H^r(\R^d)}\to 0$ for any $r\in[0,s-2h)$
%and $\LL_hu_n\to \LL_hu$ weakly in $H^{s-2h}(\R^d)$.
\end{corollary}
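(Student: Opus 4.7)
The plan is to reduce everything to Lemma \ref{lemma:conv}, i.e., to verify its three hypotheses: the uniform $H^s$ bound, the local uniform boundedness \eqref{eq:BFT} of the Fourier transforms, and the pointwise convergence $\hat u_n(\xi)\to\hat u(\xi)$. All three follow easily from combining the hypothesis $\sup_n\|u_n\|_{\dot H^s(\R^d)}<+\infty$ with the fact that each $u_n$ is a probability measure.

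First, since $u_n\in\PP(\R^d)$, we have $|\hat u_n(\xi)|=|\int_{\R^d}e^{-ix\cdot\xi}\,\d u_n(x)|\le 1$ for every $\xi\in\R^d$ and every $n$. This immediately gives \eqref{eq:BFT} (in fact globally in $\xi$, with constant $1$). It also provides the upgrade from $\dot H^s$ to $H^s$: splitting the frequency integral at $|\xi|=1$ and using $(1+|\xi|^2)^s\le 2^s$ on $B_1(0)$ and $(1+|\xi|^2)^s\le 2^s|\xi|^{2s}$ on its complement yields
$$\|u_n\|_{H^s(\R^d)}^2\le \frac{2^s|B_1(0)|}{(2\pi)^d}+2^s\|u_n\|_{\dot H^s(\R^d)}^2,$$
so that $u_n\in H^s(\R^d)$ with $\sup_n\|u_n\|_{H^s(\R^d)}<+\infty$.

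Next, the pointwise convergence of the Fourier transforms follows directly from narrow convergence: for each fixed $\xi\in\R^d$ the real and imaginary parts of $x\mapsto e^{-ix\cdot\xi}$ belong to $C_b(\R^d)$, so the definition of narrow convergence yields $\hat u_n(\xi)\to\hat u(\xi)$. With the three hypotheses verified, Lemma \ref{lemma:conv} applies and delivers $u\in H^s(\R^d)$ together with all the stated strong/weak convergences for $u_n$ and for $\LL_h u_n$. There is no real obstacle here — the content of the corollary lies entirely in Lemma \ref{lemma:conv}, and the role of this statement is to repackage its hypotheses in the form that actually arises in the minimizing-movement scheme, where narrow convergence (rather than pointwise convergence of $\hat u_n$) is what the compactness in $(\PP_2(\R^d),W)$ provides.
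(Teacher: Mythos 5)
Your proposal is correct and follows essentially the same route as the paper: bound $|\hat u_n(\xi)|\le 1$ using that $u_n$ is a probability measure to get \eqref{eq:BFT} and the upgrade from the $\dot H^s$ bound to a uniform $H^s$ bound via a frequency splitting at $|\xi|=1$, then deduce pointwise convergence of $\hat u_n$ from narrow convergence and invoke Lemma \ref{lemma:conv}. The only cosmetic difference is that the paper reduces the $H^s$ bound to an $L^2$ bound before splitting, whereas you bound $(1+|\xi|^2)^s$ directly; both yield the same conclusion.
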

\begin{proof}
We prove that the assumptions of Lemma \ref{lemma:conv} hold.
Since $u_n$ is a density of a probability measure, then $|\hat u_n(\xi)|\leq 1$ for any $\xi\in\R^d$.
Then \eqref{eq:BFT} holds.
In order to prove that $\sup_{n\in\N}\|u_n\|_{H^s(\R^d)}<+\infty$,
by definition of the $H^s(\R^d)$ norm,
it is sufficient to prove that $\sup_{n\in\N}\|u_n\|_{L^2(\R^d)}<+\infty$.
Denoting by $B_1$ the unitary ball in $\R^d$, we have
\[ \int_{\R^d}|\hat u_n(\xi) |^2\,\d\xi = \int_{B_1}|\hat u_n(\xi) |^2\,\d\xi+\int_{\R^d\setminus B_1}|\hat u_n(\xi) |^2\,\d\xi
\leq |B_1| + \int_{\R^d}|\xi|^{2s}|\hat u_n(\xi) |^2\,\d\xi,\]
and by Plancherel formula we obtain
\begin{equation}
	(2\pi)^{d}\| u_n\|^2_{L^2(\R^d)}\leq |B_1|+ \| u_n\|^2_{\dot H^s(\R^d)}.
\end{equation}
Moreover the narrow convergence of $u_n$ to $u$ implies the pointwise convergence
$\hat u_n(\xi) \to \hat u(\xi)$ for any $\xi\in\R^d$.
\end{proof}

\section{Energy functional and first convergence result}\label{Sec:functional}

\subsection{Energy functional}
After noticing that a Borel probability measure $u$ is a tempered distribution with $\hat u$ in $L^1_{loc}(\R^d)$, we
define the energy functional
$\FF_s:\PP_2(\R^d)\to [0,+\infty]$
by
\begin{equation*}\label{functional}
\FF_s(u):=\frac12\|u\|^2_{\dot H^{s}(\mathbb{R}^d)}.
\end{equation*}
We denote by  $D(\FF_s)=\{u\in\PP_2(\R^d):\FF_s(u)<+\infty\}$.
Using Corollary \ref{cor:conv}, it is immediate to prove the following Proposition.

\begin{proposition}\label{prop:FP}
The following assertions hold:
\begin{itemize}
\item $D(\FF_s) = H^{s}(\R^d)\cap\PP_2(\R^d)$.
\item $\FF_s$ is sequentially lower semicontinuous w.r.t. the narrow convergence.
\end{itemize}
\end{proposition}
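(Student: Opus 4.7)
The plan is to reduce both claims to the already-established Corollary \ref{cor:conv}, so the substantive analytic work will have been done once I invoke it.

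For the characterization of the domain, the inclusion $H^s(\R^d)\cap\PP_2(\R^d)\subseteq D(\FF_s)$ is immediate from the elementary comparison $\|u\|_{\dot H^s(\R^d)}\leq \|u\|_{H^s(\R^d)}$ stated earlier (valid for $s>0$). For the reverse inclusion, given $u\in D(\FF_s)$ I will apply Corollary \ref{cor:conv} to the constant sequence $u_n:=u$: this is a trivially narrowly convergent sequence in $\dot H^s(\R^d)\cap\PP(\R^d)$, bounded in $\dot H^s$ with $\|u_n\|_{\dot H^s}=\sqrt{2\FF_s(u)}<+\infty$. The Corollary directly yields $u\in H^s(\R^d)$, and the membership $u\in\PP_2(\R^d)$ is already built into the definition of $D(\FF_s)$.

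For lower semicontinuity, let $u_n\to u$ narrowly in $\PP_2(\R^d)$. I will first reduce to $L:=\liminf_n \FF_s(u_n)<+\infty$ (otherwise there is nothing to prove) and, passing to a subsequence, assume $\FF_s(u_n)\to L$, which provides the uniform bound $\sup_n\|u_n\|_{\dot H^s(\R^d)}<+\infty$ required by Corollary \ref{cor:conv}. By the final assertion of Lemma \ref{lemma:conv} (transferred through the Corollary), this gives $\LL_{s/2}u_n\to \LL_{s/2}u$ weakly in $L^2(\R^d)$. Since $2\FF_s(u)=\|\LL_{s/2}u\|^2_{L^2(\R^d)}$ by the identity $\|u\|_{\dot H^s}^2=\|\LL_{s/2}u\|_{L^2}^2$ proved earlier, the standard lower semicontinuity of the $L^2$-norm under weak convergence yields $\FF_s(u)\leq L$, as required.

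No step poses a real obstacle: the analytic content (pointwise-to-weak convergence of Fourier transforms under a $\dot H^s$ bound and its promotion to weak convergence of $\LL_{s/2}u_n$ in $L^2$) has been packaged into Lemma \ref{lemma:conv} and Corollary \ref{cor:conv}. The only subtle point worth flagging is that probability measures automatically satisfy $|\hat u|\leq 1$, which is what powers the Corollary and what ensures a $\dot H^s$-integrable probability density lies in $L^2(\R^d)$ and hence in $H^s(\R^d)$; with this observation in hand, both assertions follow in one line each.
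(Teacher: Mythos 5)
Your proof is correct and follows exactly the route the paper intends: the paper states the Proposition is ``immediate'' from Corollary \ref{cor:conv}, and your argument (the constant-sequence trick for $D(\FF_s)\subseteq H^s(\R^d)\cap\PP_2(\R^d)$, and the subsequence reduction plus weak $L^2$-convergence of $\LL_{s/2}u_n$ for the lower semicontinuity) is precisely the fleshing-out of that claim. No gaps.
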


\subsection{Wasserstein gradient flow, minimizing movements}\label{subsection:MM}

We consider, for $k=1,2,\ldots$, the  problem
\begin{equation}\label{minmov}
\min_{u\in\PP_2(\mathbb{R}^d)} \FF_s(u)+\frac1{2\tau}\,W^2(u,u^{k-1}_\tau).
\end{equation}
\begin{proposition}\label{prop:existenceMM}
For every $\tau>0$ and every $u_0\in D(\FF_s)$ there exists a unique sequence
$\{u_\tau^k:k=0,1,2,\ldots\}\subset D(\FF_s)$
satisfying $u_\tau^0=u_0$ and such that $u_\tau^k$ is a solution to problem \eqref{minmov} for $k=1,2,\ldots$.
\end{proposition}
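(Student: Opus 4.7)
The plan is to proceed by induction on $k$. The base case is immediate since $u_\tau^0:=u_0\in D(\FF_s)$ by assumption. For the inductive step, fixing $\mu:=u_\tau^{k-1}\in D(\FF_s)$, I would show that the functional
\[
\Phi_\tau^\mu(u):=\FF_s(u)+\frac1{2\tau}W^2(u,\mu), \qquad u\in\PP_2(\R^d),
\]
admits a unique minimizer. Existence is obtained by the direct method; uniqueness by strict convexity along linear interpolations. Automatically, the minimizer belongs to $D(\FF_s)$, so the induction carries through.

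For \textbf{existence}, note that $\Phi_\tau^\mu\geq 0$ and $\Phi_\tau^\mu(\mu)=\FF_s(\mu)<+\infty$, so the infimum $I$ is finite. Take a minimizing sequence $\{v_n\}\subset\PP_2(\R^d)$. Then $W(v_n,\mu)$ is uniformly bounded, hence the second moments of $v_n$ are uniformly bounded (via the triangle inequality in $L^2(v_n\otimes\mu)$). By \eqref{Wcomp} we may extract a subsequence narrowly converging to some $v\in\PP_2(\R^d)$. Proposition \ref{prop:FP} yields $\FF_s(v)\leq\liminf_n\FF_s(v_n)$, and \eqref{Wlsc} gives $W^2(v,\mu)\leq\liminf_n W^2(v_n,\mu)$, so $\Phi_\tau^\mu(v)\leq I$ and $v$ is a minimizer. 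Finiteness of $\Phi_\tau^\mu(v)$ and $v\in\PP_2(\R^d)$ force $v\in D(\FF_s)$.

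For \textbf{uniqueness}, I would use strict convexity of $\Phi_\tau^\mu$ along linear interpolations $v_\lambda:=(1-\lambda)v_0+\lambda v_1$, $\lambda\in(0,1)$, which is well-defined in $\PP_2(\R^d)$ as a convex combination of probability measures with finite second moments. The squared Wasserstein distance is convex along such interpolations: if $\gamma_i$ is an optimal plan from $v_i$ to $\mu$, then $(1-\lambda)\gamma_0+\lambda\gamma_1$ is an admissible plan from $v_\lambda$ to $\mu$, so
\[
W^2(v_\lambda,\mu)\leq(1-\lambda)W^2(v_0,\mu)+\lambda W^2(v_1,\mu).
\]
Since $\FF_s$ is half of a squared Hilbert seminorm, the polarization identity gives
\[
(1-\lambda)\FF_s(v_0)+\lambda\FF_s(v_1)-\FF_s(v_\lambda)=\tfrac12\lambda(1-\lambda)\|v_0-v_1\|_{\dot H^s(\R^d)}^2.
\]
If two distinct minimizers $v_0\neq v_1$ existed, both of $\Phi_\tau^\mu$-value $I$, summing these inequalities would give $I\leq I-\tfrac12\lambda(1-\lambda)\|v_0-v_1\|^2_{\dot H^s}$, whence $\|v_0-v_1\|_{\dot H^s}=0$.

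The only subtlety, which I regard as the key point to spell out, is that the $\dot H^s$ seminorm has the constants as its kernel, so strict convexity of $\FF_s$ only holds on the affine subspace of measures with fixed mass. Here the constraint pays off: $\|v_0-v_1\|_{\dot H^s}=0$ means $|\xi|^{2s}|\widehat{v_0-v_1}(\xi)|^2=0$ for a.e.\ $\xi$, so $\widehat{v_0-v_1}(\xi)=0$ for a.e.\ $\xi\neq 0$; combined with $\widehat{v_0-v_1}(0)=\int(v_0-v_1)\,\d x=0$, this forces $v_0=v_1$ as tempered distributions and hence as measures, contradicting $v_0\neq v_1$. Uniqueness follows, completing the inductive step.
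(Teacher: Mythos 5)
Your proof is correct and follows essentially the same route as the paper: existence by the direct method using narrow compactness of $W$-bounded sets together with narrow lower semicontinuity of $\FF_s$ and $W^2(\cdot,\mu)$, and uniqueness by strict convexity along linear convex combinations in $\PP_2(\R^d)$, which the paper states without detail and you correctly spell out (including the convexity of $W^2(\cdot,\mu)$ via mixing of optimal plans and the zero-mean argument killing the kernel of the $\dot H^s$ seminorm). The only cosmetic imprecision is the remark that constants lie in that kernel: with the paper's definition of $\dot H^s(\R^d)$ (requiring $\hat u\in L^1_{loc}$) nonzero constants are excluded anyway, but your resolution is valid regardless.
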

\begin{proof}
Let $\tau>0$ and $k\in\N$.
By Proposition \ref{prop:FP} and the properties of the Wasserstein distance \eqref{Wlsc} \eqref{Wcomp},
the functional $u\mapsto  \FF_s(u)+\frac1{2\tau}\,W^2(u,u^{k-1}_\tau)$ is nonnegative,
lower semicontinuous with respect to the narrow convergence and with narrowly compact sublevels.
The existence of minimizers  follows by standard direct methods in calculus of variations.
The uniqueness of minimizers follows from the strict convexity of the functional $ u\mapsto \FF_s(u)+\frac1{2\tau}\,W^2(u,u^{k-1}_\tau)$
with respect to linear convex combinations in $\PP_2(\R^d)$.%, since $\FF_s$ is a square Hilbert norm.
\end{proof}

By Proposition \ref{prop:existenceMM},  the piecewise constant curve
\begin{equation}\label{PC}
	u_\tau(t):=u_\tau^{k}, \qquad \mbox{if } t\in ((k-1)\tau,k\tau], \quad k=1,2,\ldots, \qquad u_\tau(0):=u_\tau^{0}=u_0,
\end{equation}
is uniquely defined.
%\begin{equation}\label{PC}
%	u_\tau(t):=u_\tau^{\lceil t/\tau\rceil},
%\end{equation}
%is uniquely defined, where $\lceil a\rceil:=\min\{m\in \mathbb{N}:m>a\}$ is the upper integer part.
%

We say that a curve $u:[0,+\infty)\to \PP_2(\R^d)$ is absolutely continuous with finite energy, and we use the notation
$u\in AC^2([0,+\infty);(\PP_2(\R^d),W))$, if there exists
$m\in L^2([0,+\infty))$ such that $W(u(t_1),u(t_2))\leq \int_{t_1}^{t_2} m(r)\,dr$ for any $t_1,t_2\in[0,+\infty)$, $t_1<t_2$.
%If $u\in AC^2([0,+\infty);(\PP_2(\R^d),W))$, then 
%there exists its metric derivative defined by
%\begin{equation*}\label{defmdw}
%		|u'|(t) := \lim_{h \to 0} \frac{W(u(t+h),u(t))}{|h|} \qquad \mbox{for a.e. $t \in [0,+\infty)$},
%\end{equation*}
%and $|u'|(t)\leq m(t)$ for a.e. $t \in [0,+\infty)$.

\begin{theorem}[First convergence result]\label{th:convergence1}
Let $u_0\in D(\FF_s)$ and $u_\tau$ the piecewise constant curve defined in \eqref{PC}.
For every vanishing sequence $\tau_n$ there exists a subsequence (not relabeled) $\tau_{n}$ and a curve
$u\in AC^2([0,+\infty);(\PP_2(\mathbb{R}^d),W))$ such that
\begin{equation}\label{narrowconv}
u_{\tau_{n}}(t) \to u(t)\quad \mbox{ narrowly as $n\to\infty$, for any $t\in[0,+\infty)$}.
\end{equation}
\end{theorem}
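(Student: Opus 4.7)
The plan is to follow the standard minimizing-movement convergence argument, resting on a single a priori estimate obtained from the minimality property in \eqref{minmov}.

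The starting point is to test the minimization problem defining $u_\tau^k$ with the competitor $u=u_\tau^{k-1}$, which yields
$\FF_s(u_\tau^k)+\frac{1}{2\tau}W^2(u_\tau^k,u_\tau^{k-1})\le \FF_s(u_\tau^{k-1})$. Telescoping over $k=1,\dots,N$ gives
\begin{equation}\label{plan:tel}
\FF_s(u_\tau^N)+\frac{1}{2\tau}\sum_{k=1}^N W^2(u_\tau^k,u_\tau^{k-1})\le \FF_s(u_0).
\end{equation}
From \eqref{plan:tel}, the triangle inequality, and the Cauchy--Schwarz inequality, I would derive the approximate H\"older estimate
\begin{equation}\label{plan:holder}
W(u_\tau(\sigma),u_\tau(t))\le \sqrt{2\FF_s(u_0)\bigl((t-\sigma)+\tau\bigr)},\qquad 0\le\sigma<t,
\end{equation}
using that $(l-k)\tau\le (t-\sigma)+\tau$ when $\sigma\in((k-1)\tau,k\tau]$ and $t\in((l-1)\tau,l\tau]$. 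In particular, taking $\sigma=0$ and observing that $W^2(\cdot,\delta_0)$ equals the second moment, \eqref{plan:holder} gives a uniform bound on $\int_{\R^d}|x|^2 u_\tau(t,x)\,\d x$ for $\tau\in(0,1]$ and $t$ in any bounded interval, so by \eqref{Wcomp} the set $\{u_\tau(t):\tau\in(0,1]\}$ is narrowly sequentially relatively compact for every $t\in[0,+\infty)$.

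Fixing a countable dense subset $D\subset[0,+\infty)$ containing $0$, I would then perform a diagonal extraction to obtain a not-relabeled subsequence $\tau_n$ with $u_{\tau_n}(q)\to \tilde u(q)$ narrowly for every $q\in D$. The narrow lower semicontinuity \eqref{Wlsc} applied to \eqref{plan:holder} gives $W(\tilde u(q_1),\tilde u(q_2))\le \sqrt{2\FF_s(u_0)(q_2-q_1)}$ for $q_1,q_2\in D$, so that $\tilde u$ admits a unique extension to a $1/2$-H\"older continuous curve $u:[0,+\infty)\to(\PP_2(\R^d),W)$. A routine $\varepsilon/3$-argument combining convergence at the points of $D$ with \eqref{plan:holder} and the H\"older continuity of $u$ then upgrades the narrow convergence $u_{\tau_n}(t)\to u(t)$ to every $t\in[0,+\infty)$.

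For the $AC^2$-regularity, I would introduce the piecewise constant map $m_\tau(t):=W(u_\tau^k,u_\tau^{k-1})/\tau$ on $((k-1)\tau,k\tau]$. Then \eqref{plan:tel} reads $\|m_\tau\|_{L^2(0,+\infty)}^2\le 2\FF_s(u_0)$, uniformly in $\tau$, and along a further subsequence $m_{\tau_n}\rightharpoonup m$ weakly in $L^2(0,+\infty)$ for some $m\in L^2(0,+\infty)$. Writing $W(u_{\tau_n}(t_1),u_{\tau_n}(t_2))\le\int_{t_1}^{t_2}m_{\tau_n}(r)\,\d r$ for $t_1<t_2$ that are multiples of $\tau_n$ and passing to the limit via \eqref{Wlsc} and weak $L^2$-convergence gives $W(u(t_1),u(t_2))\le\int_{t_1}^{t_2}m(r)\,\d r$ on a dense set of pairs, hence, by continuity of both sides, for all $0\le t_1<t_2$; this is exactly the $AC^2$ condition. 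The essential content is the single estimate \eqref{plan:tel}; the only mildly delicate step is the bookkeeping needed to pass from narrow convergence on the countable set $D$ to narrow convergence at every $t$, but this is routine once \eqref{plan:holder} is available.
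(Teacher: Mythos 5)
Your argument is correct, and at its core it is the same minimizing--movement compactness argument as in the paper: both proofs start from the telescoped estimate (the paper's \eqref{basicestimate}), deduce a uniform second--moment bound, and combine narrow compactness \eqref{Wcomp} with an equicontinuity estimate and the lower semicontinuity \eqref{Wlsc}. The difference is in how the pointwise narrow convergence is extracted. The paper derives equicontinuity in the form $\limsup_{\tau\to0}W(u_\tau(t_1),u_\tau(t_2))\le\int_{t_1}^{t_2}m(r)\,\d r$, where $m$ is a weak $L^2$ limit of the discrete metric speeds, and then invokes the refined Arzel\`a--Ascoli theorem of \cite[Prop.~3.3.1]{AGS} as a black box; you instead use the cruder but explicit approximate H\"older bound $W(u_\tau(\sigma),u_\tau(t))\le\sqrt{2\FF_s(u_0)((t-\sigma)+\tau)}$ and carry out the diagonal extraction over a countable dense set plus the $\varepsilon/3$ upgrade by hand. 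Your route is self-contained and yields as a by-product the quantitative $1/2$-H\"older continuity of the limit curve, at the cost of repeating a standard compactness argument; the paper's route is shorter and produces the $AC^2$ bound and the equicontinuity from the same object $m$ in one stroke. Two small points you should make explicit. First, in the $\varepsilon/3$ step you compare narrow convergence (tested against $C_b$ functions) with smallness in $W$; this requires the standard reduction of narrow convergence to bounded Lipschitz test functions, for which $|\int\phi\,\d\mu-\int\phi\,\d\nu|\le\mathrm{Lip}(\phi)\,W(\mu,\nu)$. Second, in the $AC^2$ step the set of pairs $(t_1,t_2)$ that are multiples of $\tau_n$ for all large $n$ need not be dense; the clean fix (as in the paper) is to write, for arbitrary $t_1<t_2$, the bound $W(u_{\tau_n}(t_1),u_{\tau_n}(t_2))\le\int_{[t_1/\tau_n]\tau_n}^{[t_2/\tau_n]\tau_n}m_{\tau_n}(r)\,\d r$ and pass to the limit using the weak $L^2$ convergence of $m_{\tau_n}$ together with the strong $L^2$ convergence of the indicator functions of the shrinking intervals, which gives the inequality directly for every pair. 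Neither point is a gap in substance.
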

\begin{proof}
%The proof is based on  the compactness argument of minimizing movements, stated in  \cite{AGS}. 

The first estimate given by the scheme \eqref{minmov}, is the following
\begin{equation}\label{basicestimate}
\FF_s(u_\tau^N) + \frac12\sum_{k=1}^{N} \tau\frac{W^2(u_\tau^k,u_\tau^{k-1})}{\tau^2} \leq \FF_s(u_\tau^0) =  \FF_s(u_0), 
\qquad \forall\,N\in\N.
\end{equation}

We show that for any $T>0$ the set $\AA_T:=\{u_\tau^N:\tau>0, N\in\N, N\tau\leq T\}$ is bounded in $(\PP_2(\R^d), W)$ and by \eqref{Wcomp} sequentially narrowly compact.\\
Indeed, recalling that $\int_{\R^d}|x|^2\,\d u(x)= W^2(u,\delta_0)$ for any $u\in\PP_2(\R^d)$,
using the triangle inequality for $W$ and Jensen's discrete inequality we have
\begin{equation}\label{boundW}
\begin{aligned}
   \int_{\R^d}|x|^2u_\tau^N(x)\,\d x= W^2(u_\tau^N,\delta_0) & \leq \Big(\sum_{k=1}^{N} W(u_\tau^k,u_\tau^{k-1}) + W(u_\tau^0,\delta_0)\Big)^2 \\
    &\leq 2\Big(\sum_{k=1}^{N} \tau\frac{W(u_\tau^k,u_\tau^{k-1})}{\tau}\Big)^2 +2 W^2(u_\tau^0,\delta_0) \\
    &\leq 2N\tau \sum_{k=1}^{N} \tau\frac{W^2(u_\tau^k,u_\tau^{k-1})}{\tau^2} +2 W^2(u_\tau^0,\delta_0).
\end{aligned}
\end{equation}
Since $\FF_s\geq 0$,   from \eqref{basicestimate} and \eqref{boundW} it follows that 
\begin{equation}\label{boundA}
	\int_{\R^d}|x|^2u_\tau^N(x)\,\d x \leq 2T\FF_s(u_0) + 2\int_{\R^d}|x|^2u_0(x)\,\d x, \qquad \forall \,N\in\N: \;N\tau\leq T
\end{equation}
and the boundedness of $\AA_T$ follows.

We define the piecewise constant function $m_\tau:[0,+\infty)\to [0,+\infty)$ as
$$ m_\tau(t):=\frac{W(u_\tau(t),u_\tau(t-\tau))}{\tau}$$
with the convention that $u_\tau(t-\tau)=u_\tau(0)$ if $t-\tau<0$.
Since $\FF_s\geq 0$, from \eqref{basicestimate} it follows that
\begin{equation*}\label{basicestimate2}
 \frac12\int_{0}^{+\infty} m_\tau^2(t)\,\d t \leq  \FF_s(u_0).
\end{equation*}
It follows that there exists $m\in L^2(0,+\infty)$ such that $m_\tau$ weakly converges to $m$ in $L^2(0,+\infty)$.
Moreover for any $t_1,t_2 \in [0,+\infty)$,  $t_1<t_2$, setting $k_1(\tau)=[t_1/\tau]$ and $k_2(\tau)=[t_2/\tau]$, 
by triangle inequality it holds
\begin{equation*}\label{equi1}
\begin{aligned}
   W(u_\tau(t_1),u_\tau(t_2)) & \leq \sum_{k=k_1(\tau)}^{k_2(\tau)-1} W(u_\tau^k,u_\tau^{k-1}) \leq \int_{k_1(\tau)\tau}^{k_2(\tau)\tau}m_\tau(t)\,\d t .
   \end{aligned}
\end{equation*}
By the $L^2$ weak convergence of $m_\tau$ the following equicontinuity estimate holds
\begin{equation}\label{equicont}
	\limsup_{\tau\to 0} W(u_\tau(t_1),u_\tau(t_2))\leq \lim_{\tau\to 0} \int_{k_1(\tau)\tau}^{k_2(\tau)\tau}m_\tau(t)\,\d t = \int_{t_1}^{t_2}m(t)\,\d t.
\end{equation}
Applying Proposition 3.3.1 of \cite{AGS} we obtain the convergence  \eqref{narrowconv}.
Passing to the limit in \eqref{equicont} we obtain
\begin{equation*}\label{ac2}
	 W(u(t_1),u(t_2))\leq \int_{t_1}^{t_2}m(t)\,\d t, \qquad \forall\, t_1,t_2 \in [0,+\infty),\quad t_1<t_2,
\end{equation*}
and then $u\in AC^2([0,+\infty);(\PP_2(\mathbb{R}^d),W))$.
% and
%\begin{equation}\label{md}
% 	\int_{0}^{+\infty} |u'|^2(t)\,\d t \leq 2 \FF_s(u_0)
%\end{equation}
%holds.
\end{proof}

\section{Estimates on discrete solutions, convergence and weak solution}\label{Sec:flow} 
In this Section
we briefly review the ``flow interchange estimate'' introduced by Matthes-McCann-Savar\'e in \cite{MMS09}.
Using this estimate with the entropy functional, we  obtain a suitable regularity estimate for the
family of discrete solutions $u_\tau$.
Moreover, using this estimate with a family of suitable potential energy functionals, 
we obtain that $u_\tau$ satisfies an approximate weak formulation of the equation in \eqref{equation}.

%\begin{definition}[\textbf{Displacement convex entropy}]\label{entr} 
%Let $V:[0,+\infty)\to\R$ be a convex function with super linear growth at infinity, such that $V(0)=0$,  $V\in C^1(0,+\infty)$, 
%$V$  is continuous at $0$,
%$ \lim_{x\downarrow 0}\frac{V(x)}{x^\alpha}>-\infty$ for some $\alpha>\frac{d}{d+2}$ and
%the following McCann displacement convexity assumption (introduced in \cite{Mc}) holds:
%\begin{equation*}\label{McCann}
%	r \mapsto r^dV(r^{-d}) \qquad \text{is convex and decreasing in }(0,+\infty).
%\end{equation*}
%If $V$ satisfies the above assumptions, we say that the functional
% $\VV:\PP_2(\R^d)\to (-\infty,+\infty]$, defined by
% $$\VV(u)=\int_{\R^d} V(u(x))\,\d x$$ if $u$
% is absolutely continuous with respect to the Lebesgue measure and $\VV(u)=+\infty$ otherwise,
%%  $$\VV(u)=\begin{cases} \int_{\R^d} V(u(x))\,\d x &\mbox{if $u$ is absolutely continuous w.r.t. Lebesgue measure} \\
%%+\infty & \mbox{otherwise }
%%\end{cases}
%%$$ 
%is a \emph{displacement convex entropy}.
%We say that $V$ is the density function of $\VV$.
%% and we assume that $D(\VV)\not=\emptyset$.
%\end{definition}
%As usual we denote by $D(\VV)$ the set of all $u\in\PP_2(\R^d)$ such that $\VV(u)<+\infty$.
%
%\begin{remark}\label{domain}\rm
%The condition on the behavior of $V$ at $0$ is needed as usual to have the integrability of the negative part of {\RRR $V\circ u$},
% as soon as $u$ is a probability density with finite second moment.
%\end{remark}

\subsection{Flow interchange technique}
We say that a lower semi continuous functional $\VV:\PP_2(\R^d)\to(-\infty,+\infty]$, 
with proper domain $D(\VV)=\{u\in\PP_2(\R^d): \VV(u)<+\infty\}\not=\emptyset$, 
generates a $\lambda$-flow, for $\lambda\in\R$, if there exists a
continuous semigroup $S_t:D(\VV)\to D(\VV)$ 
such that
the following family of \emph{Evolution Variational Inequalities} %(see \cite[Theorem 11.2.5]{AGS})
\begin{equation}\label{EVI}
      \limsup_{t\to 0^+} \frac{W^2(S_t(u),v)-W^2(u,v)}{2t} +  \frac{\lambda}{2}W^2(u,v)  \le \VV(v)-\VV(u), \quad \forall u\in D(\VV),
\end{equation}
hold.
 We recall that a continuous semigroup is a family of maps $S_t:D(\VV)\to D(\VV)$, $t\geq 0$, such that
 $$ S_t(S_r(u))=S_{t+r}(u), \quad \forall\, t,r\geq 0,\qquad \lim_{t\to 0^+}W(S_t(u),u)=0, \quad \forall u\in D(\VV). $$

% 
% 
%It is well known that a displacement convex entropy $\VV$ generates $0$ flow (see \cite[Theorem 11.2.5]{AGS})
%and $S_t(\bar u)$ is the unique distributional solution of the Cauchy problem $$\de_t u=\Delta(L_V(u)),\quad u(0)=\bar u,$$ where
%$L_V(u):=uV'(u)-V(u)$.
% 
%The semigroup is $\lambda$-contractive w.r.t. $W$ and extends to $\overline{D(\VV)}=\PP_2(\mathbb{R}^d)$.
%Thanks to the regularizing effect $S_t(u)\in D(\VV)$ for any $u\in\PP_2(\mathbb{R}^d)$ and any $t>0$, 
%we obtain that \eqref{EVI} holds for every $u,v\in\PP_2(\mathbb{R}^d)$.

If $u\in D(\FF_s)$ we define the dissipation of $\FF_s$ along the flow $S_t$ of $\VV$ by
\begin{equation}\label{def:DVF}
 \mathfrak{D}_\VV \FF_s(u):= \limsup_{t \to 0^+} \frac{\FF_s(u)-\FF_s(S_t(u))}{t}.
\end{equation}

\begin{proposition}[\textbf{Flow interchange}]\label{prop:FI}
Let $\{u_\tau^k:k=0,1,2,\ldots\}$ be the sequence given by {\rm Proposition \ref{prop:existenceMM}}, $\lambda\in\R$ and $\VV$  a 
functional generating a $\lambda$-flow.
If $u_\tau^k\in D(\VV)$
%\begin{equation}\label{>-inf}
%\mathfrak{D}_\VV\FF_s(u_\tau^k)>-\infty\quad \text{for } k\geq 1,
%\end{equation} 
then 
\begin{equation}\label{FIE}
 \mathfrak{D}_\VV \FF_s(u_{\tau}^k) + \frac{\lambda}{2\tau}W^2(u_\tau^k,u_\tau^{k-1}) \le   \frac{\VV(u_{\tau}^{k-1}) -\VV(u_{\tau}^k)}{\tau}, \qquad \, k= 1,2,\ldots.
\end{equation}
\end{proposition}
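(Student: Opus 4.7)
The plan is the standard two–step flow interchange argument: use the minimality of $u_\tau^k$ to compare with the perturbation $S_t(u_\tau^k)$, and then use the EVI satisfied by $S_t$ to bound the resulting Wasserstein difference quotient.

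First I would exploit the minimality of $u_\tau^k$ in the scheme \eqref{minmov}. Since $S_t(u_\tau^k)\in D(\VV)\subset D(\FF_s)$ is an admissible competitor for every $t\geq0$, the inequality
\begin{equation*}
\FF_s(u_\tau^k)+\frac{1}{2\tau}W^2(u_\tau^k,u_\tau^{k-1})\le \FF_s(S_t(u_\tau^k))+\frac{1}{2\tau}W^2(S_t(u_\tau^k),u_\tau^{k-1})
\end{equation*}
holds for every $t>0$. Rearranging and dividing by $t$,
\begin{equation*}
\frac{\FF_s(u_\tau^k)-\FF_s(S_t(u_\tau^k))}{t}\le \frac{1}{\tau}\cdot\frac{W^2(S_t(u_\tau^k),u_\tau^{k-1})-W^2(u_\tau^k,u_\tau^{k-1})}{2t}.
\end{equation*}

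Next I would take $\limsup_{t\to0^+}$ on both sides. The left hand side is, by definition \eqref{def:DVF}, exactly $\mathfrak{D}_\VV\FF_s(u_\tau^k)$. For the right hand side I apply the EVI \eqref{EVI} with the choice $u=u_\tau^k\in D(\VV)$ and $v=u_\tau^{k-1}$, which gives
\begin{equation*}
\limsup_{t\to0^+}\frac{W^2(S_t(u_\tau^k),u_\tau^{k-1})-W^2(u_\tau^k,u_\tau^{k-1})}{2t}\le \VV(u_\tau^{k-1})-\VV(u_\tau^k)-\frac{\lambda}{2}W^2(u_\tau^k,u_\tau^{k-1}).
\end{equation*}
Combining the two inequalities and dividing through by $\tau$ yields \eqref{FIE}.

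There is essentially no obstacle here beyond bookkeeping: one only has to check that $S_t(u_\tau^k)$ is indeed admissible in the minimization (which follows from $S_t:D(\VV)\to D(\VV)$ and $D(\VV)\subset D(\FF_s)$, so that the perturbed energy $\FF_s(S_t(u_\tau^k))$ is finite and the resulting inequality is meaningful rather than trivially $+\infty$), and that passing to $\limsup$ preserves the inequality — which is immediate since $a_t\le b_t$ implies $\limsup a_t\le \limsup b_t$. The content of the statement is thus entirely built into the minimality of $u_\tau^k$ and into the EVI defining the $\lambda$-flow; the proposition is a pure assembly of these two ingredients.
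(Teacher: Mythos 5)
Your proof is correct and follows essentially the same route as the paper: compare the minimizer with the competitor $S_t(u_\tau^k)$, divide by $t$, take $\limsup_{t\to 0^+}$, and invoke the EVI with $v=u_\tau^{k-1}$. The only small inaccuracy is your claim $D(\VV)\subset D(\FF_s)$, which is neither assumed nor needed --- the competitor only has to lie in $\PP_2(\R^d)$, and if $\FF_s(S_t(u_\tau^k))=+\infty$ the comparison inequality and the subsequent $\limsup$ bound hold trivially.
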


\begin{proof}
For  $t>0$ and $k\geq 1$,
by definition of minimizer there holds
\[
    \FF_s(u_\tau^k) + \frac{1}{2\tau}W^2(u_\tau^k,u_\tau^{k-1})\leq \FF_s(S_t(u_\tau^k)) + \frac{1}{2\tau}W^2(S_t(u_\tau^k),u_\tau^{k-1}),
\]
that is,
\[
   \tau\frac{\FF_s(u_\tau^k) -\FF_s(S_t(u_\tau^k))}{t} \leq \frac{W^2(S_t(u_\tau^k),u_\tau^{k-1}) - W^2(u_\tau^k,u_\tau^{k-1})}{2t}.
\]
By using \eqref{EVI} and the definition \eqref{def:DVF} we obtain \eqref{FIE}.
\end{proof}

The next two propositions summarize well known results (see \cite{AGS} Theorems 11.2.5 and 11.2.3).
\begin{proposition}\label{prop:EntropyFlow}
The entropy functional $\HH:\PP_2(\R^d)\to (-\infty,+\infty]$ 
 defined by 
 \begin{equation*}
 \HH(u):=\begin{cases}
 	\displaystyle\int_{\R^d}u\log u \,\d x &\mbox{if $u$ is absolutely continuous w.r.t. Lebesgue measure,}\\
 	+\infty &\mbox{otherwise}, 
	\end{cases}
\end{equation*}
%We use the notation $D(\HH)=\{u\in\PP_2(\R^d):\HH(u)<+\infty\}$ for the domain of $\HH$.
generates a $0$-flow.
The semigroup associated $u_t:=S_t(\bar u)$ is the unique solution of the Cauchy problem for the heat equation
$$\begin{cases}
\de_t u_t=\Delta u_t, & \mbox{in } (0,+\infty)\times\R^d\\
 \quad u_0=\bar u & \mbox{in } \R^d.
 \end{cases}$$
\end{proposition}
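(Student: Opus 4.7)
The plan is to invoke the general theory of Wasserstein gradient flows developed in AGS (Chapters 10 and 11), which is precisely designed to handle functionals of this type. First I would recall the two standing properties of the Boltzmann entropy: $\HH$ is proper (take for instance any Gaussian in $D(\HH) \cap \PP_2(\R^d)$), and $\HH$ is sequentially lower semicontinuous with respect to narrow convergence on $\PP_2(\R^d)$ (this is classical and follows from Jensen's inequality applied to the convex function $z \mapsto z \log z$).

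The decisive ingredient is McCann's celebrated displacement convexity theorem: given $u_0, u_1 \in D(\HH)$, the entropy is convex along the $W$-geodesic $u_t = ((1-t)\mathrm{id} + tT)_\# u_0$ where $T$ is the optimal transport map from $u_0$ to $u_1$, and the same holds along generalized geodesics with a fixed base point. Equivalently, $\HH$ is $0$-geodesically convex in the AGS sense. Combined with the lower semicontinuity and the completeness/separability of $(\PP_2(\R^d), W)$, the abstract generation theorem (AGS Theorem 11.2.1, or more generally Theorem 4.0.4) produces a unique continuous semigroup $S_t$ on $\overline{D(\HH)}$ satisfying the EVI \eqref{EVI} with $\lambda = 0$. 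This establishes the first part of the statement.

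To identify $S_t$ with the heat flow, I would compute the Wasserstein subdifferential of $\HH$ at a smooth, strictly positive $u$: by Otto's formal calculus (rigorously justified in AGS Theorem 10.4.13), the element of minimal norm in $\partial \HH(u)$ is the velocity field $v = \nabla \log u = \nabla u / u$, provided $u \in W^{1,1}_{\mathrm{loc}}$ and $\sqrt{u} \in H^1(\R^d)$. The EVI then forces the curve $u_t := S_t(\bar u)$ to be a solution of the continuity equation $\partial_t u_t + \mathrm{div}(u_t w_t) = 0$ with $w_t = -\nabla \log u_t$, which rewrites as $\partial_t u_t = \Delta u_t$ in the sense of distributions. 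Since the heat equation has a unique solution in $L^1(\R^d)$ starting from $\bar u$, the identification is complete.

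I do not anticipate any real obstacle, since every step is a direct citation from AGS. The most substantive input is McCann's displacement convexity, which is classical; the only mild care is that AGS's EVI formulation requires $\HH$ to be convex along generalized geodesics (not just along $W$-geodesics), but this stronger convexity is also part of McCann's framework and is proved in AGS Proposition 9.3.9.
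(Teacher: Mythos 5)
Your argument is correct and is exactly the route the paper takes: the paper offers no proof of its own but simply cites \cite{AGS} Theorems 11.2.5 and 11.2.3, whose content is precisely the chain you describe (properness and narrow lower semicontinuity of $\HH$, convexity along generalized geodesics via McCann/AGS Proposition 9.3.9, the abstract EVI generation theorem, and identification of the minimal subdifferential with $\nabla u/u$ to recover the heat equation). No gap to report.
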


\begin{proposition}\label{prop:PotentialFlow}
Let $\varphi\in C^\infty_c(\R^d)$ and $\lambda \geq \|D^2\varphi\|_\infty$.
The functional $\VV:\PP_2(\R^d)\to \R$ defined by 
$$\VV(u):=\int_{\R^d}\varphi(x)\,\d u(x)$$ 
generates a $(-\lambda)$-flow $S_t$ and $S_t(u)=(X_t)_\#u$, where $x\mapsto X_t(x)$, $x\in\R^d$, 
is the map defined by the system
\begin{equation}\label{flowRdG}
\begin{cases}
\frac{\d}{\d t} X_t(x)=-\nabla\varphi(X_t(x)), & t\in\R\\
X_0(x)=x.
\end{cases}
\end{equation}
\end{proposition}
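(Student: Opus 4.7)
\medskip
\noindent\textbf{Proof plan for Proposition \ref{prop:PotentialFlow}.}
Since $\varphi\in C^\infty_c(\R^d)$, the vector field $-\nabla\varphi$ is bounded and globally Lipschitz with constant $\lambda\ge\|D^2\varphi\|_\infty$. The Cauchy–Lipschitz theorem therefore produces a unique global $C^1$ flow $X_t:\R^d\to\R^d$, $t\in\R$, solving \eqref{flowRdG}, and $X_t$ is a diffeomorphism with $X_{t+r}=X_t\circ X_r$ for all $t,r$. Defining $S_t(u):=(X_t)_\#u$, the functoriality of the push-forward and the flow identity immediately yield the semigroup property $S_{t+r}=S_t\circ S_r$. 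For continuity in $t=0$, I would use the coupling $\gamma_t:=(\mathrm{id},X_t)_\#u$, admissible between $u$ and $S_t(u)$, to bound
\[
W^2(S_t(u),u)\le\int_{\R^d}|X_t(x)-x|^2\,\d u(x)\le t^2\|\nabla\varphi\|_\infty^2,
\]
which vanishes as $t\to 0^+$. Since $\VV$ is clearly finite on all of $\PP_2(\R^d)$ (because $\varphi$ is bounded with compact support), we have $D(\VV)=\PP_2(\R^d)$ and $S_t$ preserves the domain trivially.

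The heart of the argument is the EVI \eqref{EVI} with constant $-\lambda$, i.e.
\[
\limsup_{t\to 0^+}\frac{W^2(S_t(u),v)-W^2(u,v)}{2t}-\frac{\lambda}{2}W^2(u,v)\le \VV(v)-\VV(u).
\]
Given $u,v\in\PP_2(\R^d)$, I would fix an optimal plan $\gamma\in\PP(\R^d\times\R^d)$ realizing $W(u,v)$ and use the competitor $(X_t\circ\pi_1,\pi_2)_\#\gamma$, which has marginals $S_t(u)$ and $v$, to get
\[
W^2(S_t(u),v)-W^2(u,v)\le\int_{\R^d\times\R^d}\bigl(|X_t(x)-y|^2-|x-y|^2\bigr)\,\d\gamma(x,y).
\]
Expanding the integrand yields $2(x-y)\cdot(X_t(x)-x)+|X_t(x)-x|^2$, and dividing by $2t$ gives a sequence of functions that converges pointwise to $-(x-y)\cdot\nabla\varphi(x)$ as $t\to 0^+$ while being dominated by the $\gamma$-integrable majorant $\|\nabla\varphi\|_\infty(|x|+|y|)+\tfrac{t_0}{2}\|\nabla\varphi\|_\infty^2$ for $t\le t_0$. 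Applying the reverse Fatou lemma then gives
\[
\limsup_{t\to 0^+}\frac{W^2(S_t(u),v)-W^2(u,v)}{2t}\le\int_{\R^d\times\R^d}\nabla\varphi(x)\cdot(y-x)\,\d\gamma(x,y).
\]

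To close the estimate I would invoke the pointwise second-order Taylor bound
\[
\nabla\varphi(x)\cdot(y-x)\le\varphi(y)-\varphi(x)+\tfrac{\lambda}{2}|x-y|^2,\qquad\forall\,x,y\in\R^d,
\]
which holds because $\|D^2\varphi\|_\infty\le\lambda$. Integrating against $\gamma$ and using $(\pi_1)_\#\gamma=u$, $(\pi_2)_\#\gamma=v$ together with $\int|x-y|^2\,\d\gamma=W^2(u,v)$ yields
\[
\int\nabla\varphi(x)\cdot(y-x)\,\d\gamma\le\VV(v)-\VV(u)+\tfrac{\lambda}{2}W^2(u,v),
\]
which combined with the previous display proves \eqref{EVI} with constant $-\lambda$. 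The only technically delicate step is the passage from the difference quotient to its pointwise limit inside the integral, so I would make sure that the uniform Lipschitz bound $|X_t(x)-x|\le t\|\nabla\varphi\|_\infty$ and the finiteness of the first moment of $\gamma$ (guaranteed by $u,v\in\PP_2(\R^d)$) are explicitly invoked to justify the reverse Fatou step; everything else is a routine verification once the candidate plan and Taylor inequality are in place.
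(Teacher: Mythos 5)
Your argument is correct. Note, however, that the paper does not actually prove this proposition: it is stated as a summary of known results and delegated to \cite{AGS} (Theorems 11.2.3 and 11.2.5), where the EVI is deduced from the general principle that $(-\lambda)$-convexity of $\VV$ along generalized geodesics implies that its gradient flow satisfies the Evolution Variational Inequality with constant $-\lambda$. Your proof is a self-contained, elementary verification of the same fact for this particular smooth potential: the choice of the competitor plan $(X_t\circ\pi_1,\pi_2)_\#\gamma$, the expansion of $|X_t(x)-y|^2-|x-y|^2$, the dominated-convergence passage using $|X_t(x)-x|\le t\|\nabla\varphi\|_\infty$ and the finite first moment of $\gamma$, and the second-order Taylor bound $\nabla\varphi(x)\cdot(y-x)\le\varphi(y)-\varphi(x)+\tfrac{\lambda}{2}|x-y|^2$ are exactly the ingredients that, in the abstract theory, correspond to computing the right derivative of $t\mapsto \tfrac12 W^2(S_t(u),v)$ and using $(-\lambda)$-convexity of $\VV$ along the interpolation induced by $\gamma$. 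What your route buys is transparency and independence from the machinery of generalized geodesics; what the citation buys is brevity and the additional structural information (uniqueness and contraction properties of EVI flows) that the paper does not need here. All the steps you flag as delicate are indeed justified as you describe, and $D(\VV)=\PP_2(\R^d)$ together with the preservation of second moments under the bounded perturbation $X_t$ of the identity closes the remaining domain issues.
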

We observe that under the assumptions of Proposition \ref{prop:PotentialFlow}, $X_t$ is defined also for $t<0$.

\subsection{Improved regularity and dissipation along the Heat flow}
The following result makes use of flow interchange
with the choice $\VV=\HH$, the entropy functional.

%\begin{lemma}\label{lemma:diff}
%Let $u\in D(\FF_s)$ and $S_t$ the heat semigroup.
%If $g:[0,+\infty)\to \R$ is defined by $g(t)=\FF_s(S_t(u))$,
%then $g \in C^0([0,+\infty))$, $g$ is differentiable in $(0,+\infty)$ and
%\begin{equation}\label{gprime}
%g'(t) = -\frac{1}{(2\pi)^d} \int_{\R^d}|\xi|^{2(1-s)}|\widehat{S_t(u)}(\xi)|^2\d \xi \qquad \forall t\in (0,+\infty).
%\end{equation}
%\end{lemma}
%\begin{proof}
%We use the notation $u_t:=S_t(u)$.
%By uniqueness of the solution of the heat equation the representation
%$u_t=\Gamma_t\ast u$ holds, where $\Gamma_t$ denotes the family of gaussian kernels.
%At the level of Fourier transform the equation reads
%$\partial_t \hat u_t(\xi) + |\xi|^2\hat u_t(\xi)=0$ in $\R^d\times(0,+\infty)$.
%
%Taking into account the smoothness of $u_t$ with respect to time and space we obtain
%\begin{equation}
%\begin{aligned}
%    \frac{\d}{\d t}\FF_s(u_t) &= \frac{1}{(2\pi)^d} \int_{\R^d}|\xi|^{-2s}\hat u_t(\xi)\de_t\bar{\hat u}_t(\xi) \,\d\xi \\ &=
%    -\frac{1}{(2\pi)^d} \int_{\R^d}|\xi|^{-2s}\hat u_t(\xi)|\xi|^2\bar{\hat u}_t(\xi) \,\d\xi
%    \end{aligned}
%\end{equation}
%and the differentiability of $g$ and \eqref{gprime} follows.
%
%Since $\hat\Gamma_t(\xi)\leq 1$ we have $|\hat u_t(\xi)|^2= |\hat\Gamma_t(\xi)\hat u(\xi)|^2\leq |\hat u(\xi)|^2$ and
%it follows that $\FF_s(u_t)\leq \FF_s(u)$.
%Since $\FF_s$ is lower semi continuous with respect to the narrow convergence, the continuity of $g$ at $0$ follows.
%\end{proof}

\begin{lemma} \label{lemma:ed}
Let $u_0\in D(\FF_s)$ and $\{u_\tau^k:k=0,1,2,\ldots\}$ the sequence given by  {\rm Proposition \ref{prop:existenceMM}}.
Then
%\begin{equation}\label{decayentropy}
%    \HH(u_\tau^k) \leq \HH(u_\tau^{k-1}), \qquad k=1,2,3,...
%\end{equation}
%Moreover
$u_\tau^k\in H^{1+s}(\R^d)$ for any $k\ge 1$ and
\begin{equation}\label{regestimate}
   \|u_\tau^k\|^2_{\dot H^{1+s}(\R^d)} \leq  \frac{\HH(u_\tau^{k-1})-\HH(u_\tau^k)}{\tau},  \qquad k=1,2,\ldots.
   \end{equation}
%In particular,
%\[
%{\HH}(u_\tau^k)\le \HH(u_\tau^{k-1}), \qquad k=1,2,\ldots.
%\]
\end{lemma}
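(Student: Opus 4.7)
The plan is to apply the flow interchange estimate (Proposition~\ref{prop:FI}) with $\VV = \HH$, using that by Proposition~\ref{prop:EntropyFlow} the entropy generates a $0$-flow whose semigroup $S_t$ is the heat semigroup. Once we establish the identification
$$\mathfrak{D}_\HH \FF_s(u_\tau^k) = \|u_\tau^k\|^2_{\dot H^{1+s}(\R^d)}$$
(allowing the value $+\infty$), inequality \eqref{FIE} with $\lambda=0$ immediately gives \eqref{regestimate}, and the finiteness of its right-hand side forces $u_\tau^k \in \dot H^{1+s}(\R^d)$. Since $u_\tau^k \in H^s(\R^d) \subset L^2(\R^d)$, the elementary bound $(1+|\xi|^2)^{1+s}\leq 2^{1+s}(1+|\xi|^{2(1+s)})$ then upgrades this to $u_\tau^k \in H^{1+s}(\R^d)$.

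Before invoking Proposition~\ref{prop:FI}, I verify $u_\tau^k \in D(\HH)$. For the lower bound, comparing $u_\tau^k$ with the Gaussian $\gamma(x)=Z_d^{-1}e^{-|x|^2}$ (with $Z_d=\int e^{-|x|^2}\,\d x$) and using nonnegativity of the relative entropy $\int u_\tau^k \log(u_\tau^k/\gamma)\,\d x \geq 0$ yields
$$\HH(u_\tau^k) \geq -\int_{\R^d}|x|^2 u_\tau^k(x)\,\d x - \log Z_d > -\infty,$$
by the second-moment bound \eqref{boundA}. For the upper bound, the elementary inequality $a\log a \leq a^2$ for $a \geq 0$ combined with $u_\tau^k \in L^2(\R^d)$ gives $\HH(u_\tau^k) < +\infty$. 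The same argument applied to $u_\tau^{k-1}$ shows the right-hand side of \eqref{regestimate} is unambiguously defined.

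The core of the proof is the dissipation computation. Plancherel's formula \eqref{Plancherel} together with the identity $\widehat{S_t u_\tau^k}(\xi)=e^{-t|\xi|^2}\widehat{u_\tau^k}(\xi)$ gives
$$\frac{\FF_s(u_\tau^k)-\FF_s(S_t u_\tau^k)}{t} = \frac{1}{2(2\pi)^d}\int_{\R^d} |\xi|^{2s}\, \frac{1-e^{-2t|\xi|^2}}{t}\, |\widehat{u_\tau^k}(\xi)|^2\,\d\xi.$$
Since $\sigma \mapsto (1-e^{-\sigma})/\sigma$ is strictly decreasing on $(0,+\infty)$, the integrand is monotone nondecreasing as $t\downarrow 0^+$ with pointwise limit $2|\xi|^{2(s+1)}|\widehat{u_\tau^k}(\xi)|^2$. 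Monotone convergence then yields the claimed identification of $\mathfrak{D}_\HH \FF_s(u_\tau^k)$ with $\|u_\tau^k\|^2_{\dot H^{1+s}(\R^d)}$, and Proposition~\ref{prop:FI} closes the argument. The main technical point is precisely this identification---not just an upper or lower bound on the dissipation---and the monotone-convergence framework is what makes it work uniformly regardless of whether the value is finite or infinite a priori; in particular it lets us start from only $u_\tau^k\in H^s$ and read off the additional $\dot H^{1+s}$ regularity from the dissipation of the entropy.
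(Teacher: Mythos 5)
Your proof is correct, and it follows the same overall skeleton as the paper: apply the flow interchange estimate (Proposition \ref{prop:FI}) with $\VV=\HH$ and the heat semigroup of Proposition \ref{prop:EntropyFlow}, after checking $u_\tau^k\in D(\HH)$ from $u_\tau^k\in L^2(\R^d)$. Where you genuinely diverge is in the key technical step, the evaluation of the dissipation $\mathfrak{D}_\HH\FF_s(u_\tau^k)$. The paper sets $g(t)=\FF_s(S_t(u_\tau^k))$, proves $g\in C^1(0,+\infty)$ with $g'(t)=-\|S_t(u_\tau^k)\|^2_{\dot H^{1+s}(\R^d)}$ by differentiating under the integral sign, establishes continuity of $g$ at $t=0$, applies Lagrange's mean value theorem, and then invokes lower semicontinuity of the $\dot H^{1+s}$ seminorm under narrow convergence to obtain only the one-sided bound $\|u_\tau^k\|^2_{\dot H^{1+s}(\R^d)}\le\mathfrak{D}_\HH\FF_s(u_\tau^k)$, which is all that is needed. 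You instead write the difference quotient explicitly on the Fourier side as $\frac{1}{2(2\pi)^d}\int|\xi|^{2s}\frac{1-e^{-2t|\xi|^2}}{t}|\hat u_\tau^k(\xi)|^2\,\d\xi$ and exploit the monotonicity of $\sigma\mapsto(1-e^{-\sigma})/\sigma$ together with monotone convergence; this is shorter, avoids both the differentiation-under-the-integral argument and the semicontinuity step, works uniformly whether the limit is finite or infinite, and yields the exact identity $\mathfrak{D}_\HH\FF_s(u_\tau^k)=\|u_\tau^k\|^2_{\dot H^{1+s}(\R^d)}$ (the limsup being an actual limit) rather than an inequality. Your Gaussian-comparison lower bound for $\HH(u_\tau^k)$ plays the role of the Carleman-type estimate \eqref{Carleman} that the paper only deploys later in Corollary \ref{cor:HR}; including it here is a harmless (indeed tidy) way of ensuring the right-hand side of \eqref{regestimate} is well defined.
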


\begin{proof}
Since $u_\tau^k\in D(\FF_s)\subset L^2(\R^d)$ and $(u\log u)_+\leq u^2$, then $u_\tau^k\in D(\HH)$ for any $k\geq 0$.

%By uniqueness of the solution of the heat equation, we have the representation
%$S_t(u_0)=\Gamma_t*u_0$, where $\Gamma_t(x):=\frac{1}{(2\pi t)^{d/2}}e^{-|x|^2/(4t)}$
%and $*$ denotes the convolution with respect to the space variable $x$.
%Moreover, $\widehat{S_t(u_0)}(\xi)=\hat\Gamma_t(\xi)\hat u_0(\xi)$ and 
%$\hat\Gamma_t(\xi)=e^{-t|\xi|^2}$.
%The Cauchy problem for the heat equation 
%in the Fourier setting reads $\de_t \hat u_t(\xi)= -|\xi|^2\hat u_t(\xi)$, and $\lim_{t\to 0}\hat u_t(\xi)=\hat u_0(\xi)$.

Let us fix $k\geq 1$. For $t\geq 0$, we denote by $S_t$ the $0$-flow generated by the entropy $\HH$, and we define $w_t:=S_t(u_\tau^k)$. By Proposition \ref{prop:EntropyFlow}, 
$S_t$ coincides with the heat semigroup on $\R^d$.
By uniqueness of the solution of the Cauchy problem for the heat equation, we have the representation
\begin{equation}\label{eq:HeqC}
w_t=\Gamma_t*u_\tau^k, \qquad \Gamma_t(x):=\frac{1}{(2\pi t)^{d/2}}e^{-|x|^2/(4t)},
\end{equation}
where $*$ denotes the convolution with respect to the space variable $x$.
For the relation with convolution and Fourier transform, by \eqref{eq:HeqC} we have 
\begin{equation}\label{eq:HeqF}
\hat w_t(\xi)=\hat\Gamma_t(\xi)\hat u_\tau^k(\xi).
\end{equation}
We also recall that the Fourier transform of $\Gamma_t$ has the expression
 \begin{equation}\label{FGamma}
\hat\Gamma_t(\xi)=e^{-t|\xi|^2}.
\end{equation}
The Cauchy problem for the heat equation 
in the Fourier setting can be written as a family depending on $\xi\in\R^d$ of Cauchy problems 
\begin{equation}\label{HeqF}
\begin{cases}
\de_t \hat w_t(\xi)= -|\xi|^2\hat w_t(\xi) & t\in(0,+\infty), \\
\lim_{t\to 0}\hat w_t(\xi)=\hat u_\tau^k(\xi). &
\end{cases}
\end{equation}
It is easy to prove that 
$w_t\in \dot H^{1+s}(\mathbb{R}^d)$ for any $t>0$.
%(in general, the same proof shows that $w_t\in \dot H^{r}(\mathbb{R}^d)$ for any $r>0$). 
Indeed, by \eqref{eq:HeqF} we have
\[\begin{aligned}
 \|w_t\|^2_{\dot H^{1+s}(\R^d)} &= (2\pi)^{-d}\int_{\R^d}|\xi|^{2(1+s)}|\hat w_t(\xi)|^2\,\d\xi 
 =(2\pi)^{-d}\int_{\mathbb{R}^d}|\xi|^{2(1+s)}|\hat\Gamma_t(\xi)|^2|\hat u_\tau^k(\xi)|^2\,\d\xi\\
 &\le C_t\|u_\tau^k\|^2_{\dot H^{s}(\mathbb{R}^d)} = 2C_t\FF_s(u_\tau^k)<+\infty,
\end{aligned}\]
where, using \eqref{FGamma}, 
\begin{equation}\label{Ct}
C_t:=\max_{\xi\in\mathbb{R}^d}|\xi|^2|\hat\Gamma_t(\xi)|^2=e^{-2}t^{-2}.
\end{equation} 

We define the function $g:[0,+\infty)\to\R$ by
$g(t):=\FF_s(w_t)$. 
We prove that $g$ is differentiable in $(0,+\infty)$,
continuous at $t=0$, and 
\begin{equation}\label{gprime}
g'(t)=
%\frac{d}{dt} \FF_s(w_t) 
 %-\|S_{t}(u_\tau^k)\|^2_{\dot H^{1+s}(\R^d)} =
  -\| w_t\|^2_{\dot H^{1+s}(\R^d)}
 \qquad \forall\,t\in(0,+\infty).
%-\frac{1}{(2\pi)^d} \int_{\R^d}|\xi|^{2(1-s)}|\widehat{S_t(u)}(\xi)|^2\d \xi \qquad \forall t\in (0,+\infty).
\end{equation}
Indeed, taking into account that $ |\hat w_t(\xi)|^2=\hat w_t(\xi)\overline{\hat w_t(\xi)}=\hat w_t(\xi)\hat w_t(-\xi)$, 
by \eqref{HeqF} we have that
$$ \de_t |\hat w_t(\xi)|^2= -2|\xi|^2 |\hat w_t(\xi)|^2 \qquad\forall\,(t,\xi)\in (0,+\infty)\times\R^d.$$
Since for any $\xi\in\R^d$ the function $t\mapsto |\hat w_t(\xi)|^2$ belongs to $C^1(0,+\infty)$ and
$$\Big|\de_t |\xi|^{2s} |\hat w_t(\xi)|^2\Big|= 2 |\xi|^{2s+2} |\hat w_t(\xi)|^2 \leq 2 C_t |\xi|^{2s} |\hat u_\tau^k(\xi)|^2,$$
 we can differentiate under the integral sign obtaining that
\begin{equation*}
\begin{aligned}
    g'(t) &=  \frac{1}{2(2\pi)^d} \frac{\d}{\d t}  \int_{\R^d}|\xi|^{2s}|\hat w_t(\xi)|^2\,\d\xi \\
  %  =\frac{1}{(2\pi)^d} \int_{\R^d}|\xi|^{2s}\overline{\hat w_t(\xi)}\de_t{{\hat w_t(\xi)}} \,\d\xi \\ 
    &=-\frac{1}{(2\pi)^d} \int_{\R^d}|\xi|^{2s}|\xi|^2|\hat w_t(\xi)|^2 \,\d\xi
    =-\|w_t\|^2_{\dot H^{1+s}(\mathbb{R}^d)}
    \end{aligned}
\end{equation*}
and \eqref{gprime} is proved. 
Since $0< \hat\Gamma_t(\xi)\leq 1$ we have 
$|\hat w_t(\xi)|^2= |\hat\Gamma_t(\xi)\hat{u}_\tau^k(\xi)|^2\leq |\hat {u}_\tau^k(\xi)|^2$ and then
$\FF_s(w_t)\leq \FF_s({u}_\tau^k)$, i.e., $g(t)\leq g(0)$  for any $t\in (0,+\infty)$. 
Since $\FF_s$ is lower semi continuous with respect to the narrow convergence (Proposition \ref{prop:FP}),
we have that $\liminf_{t\to 0^+}g(t)\geq g(0)$ and the continuity of $g$ at $t = 0$ is proved.

Applying Lagrange's mean value Theorem to $g$ in the interval $[0,t]$,
for any $t>0$ there exists $\theta(t)\in (0,t)$ such that, recalling the definition of $g$ and \eqref{gprime},
\[
\frac{\FF_s(u_\tau^k) -\FF_s(S_t(u_\tau^k))}{t} =  \|S_{\theta(t)}(u_\tau^k)\|^2_{\dot H^{1+s}(\R^d)}.
\]
From this equality and the definition \eqref{def:DVF}, 
by the lower semicontinuity of the $\dot H^{1+s}(\R^d)$ semi-norm with respect to the narrow convergence it follows that
\[
 \|u_\tau^k\|^2_{\dot H^{1+s}(\R^d)} \leq   \mathfrak{D}_\HH \FF_s(u_\tau^k).
\]
Finally, by Propositions \ref{prop:FI} and \ref{prop:EntropyFlow}, we obtain the estimate \eqref{regestimate} 
 and $u_\tau^k\in H^{1+s}(\mathbb{R}^d)$.
%Then it follows
%\[
%    \HH(S_t(u_\tau^k))) \leq \HH(u_\tau^{k-1}),
%\]
%and \eqref{decayentropy} is a consequence of the lower semicontinuity  of $\HH$ with respect to the narrow convergence.
%Since $\hat u(\xi)\leq 1$ for any $\xi$ and
%$(\alpha+|\xi|^2)^{1-s}\leq (\alpha+1)|\xi|^2(\alpha+|\xi|^2)^{-s}$ for every $\xi$ such that $|\xi|\geq 1$,
%we have
%\[
%     \int_{\R^d}(\alpha+|\xi|^2)^{1-s}|\widehat{S_t(u)}(\xi)|^2\d \xi \leq (\alpha+1)|B_1|+ (\alpha+1) \int_{\R^d}(\alpha+|\xi|^2)^{-s}|\xi|^2|\widehat{S_t(u)}(\xi)|^2\d \xi.
%\]
%It follows that there exists a constant $C$ depending only on $\alpha$ and $d$ such that
%\[
%    \tau \|S_{\theta(t)}(u_\tau^k)\|^2_{H^{1-s}(\R^d)} \leq \tau C + C \HH(u_\tau^{k-1}) - \HH(S_t(u_\tau^k)).
%\]
%Since
%\[
%    \tau \|S_{\theta(t)}(u_\tau^k)\|^2_{\dot H^{1-s}(\R^d)} \leq \HH(u_\tau^{k-1}) - \HH(S_t(u_\tau^k)),
%\]
%passing to the limit as $t \to 0$ and taking into account the lower semicontinuity of $\HH$ and the $\dot H^{1-s}$ norm with respect to the narrow convergence
%we obtain that $u_\tau^k\in \dot H^{1-s}(\R^d)$ and \eqref{regestimate} holds.
\end{proof}

%In the following, $u_{\tau}(t):=u_{\tau}^{\lceil t/\tau\rceil}$ denotes the discrete piecewise constant curve.

Integrating the estimate \eqref{regestimate}  with respect to time, we obtain the following space-time bound on the discrete solution $u_\tau$.

\begin{corollary}\label{cor:HR}
Let $u_0\in  D(\FF_s)$, $\tau>0$, $\{u_\tau^k:k=0,1,2,\ldots\}$ the sequence given by {\rm Proposition \ref{prop:existenceMM}} and
$u_{\tau}$ the corresponding discrete piecewise constant approximate solution defined in \eqref{PC}.
Then
$u_{\tau}(t)\in H^{1+s}(\mathbb{R}^d)$ for every $t>0$ and there exists $C>0$ depending only on the dimension $d$
such that
\begin{equation}\label{RI1}
\int_{0}^T\|u_{\tau}(t)\|_{ \dot H^{1+s}(\mathbb{R}^d)}^2\,\d t \le \HH(u_0) + C\Big(1 +T\FF_s(u_0)+\int_{\R^d}|x|^2 u_0(x) \,\d x\Big)
\end{equation} 
 for any $T>0$.
\end{corollary}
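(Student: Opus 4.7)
The plan is to turn the per-step estimate of Lemma \ref{lemma:ed} into a cumulative bound by summing over the discrete time index. Because the right-hand side of \eqref{regestimate} telescopes, the whole estimate collapses to a single boundary term, namely the entropy of $u_\tau^N$ for $N$ just above $T/\tau$. The remaining work is then to control this entropy from below by a quantity that grows at most linearly in $T$, $\FF_s(u_0)$ and the initial second moment.

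Concretely, I would fix $T>0$, set $N:=\lceil T/\tau\rceil$ (so that $N\tau\le T+\tau$) and, using the piecewise-constant definition \eqref{PC}, estimate
\begin{equation*}
\int_0^T \|u_\tau(t)\|^2_{\dot H^{1+s}(\R^d)}\,\d t\le\sum_{k=1}^{N}\tau\,\|u_\tau^k\|^2_{\dot H^{1+s}(\R^d)}.
\end{equation*}
Applying Lemma \ref{lemma:ed} to each summand and telescoping the resulting bound yields $\HH(u_0)-\HH(u_\tau^N)$ on the right-hand side. At this stage, the only remaining task is to produce an upper bound on $-\HH(u_\tau^N)$.

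The key auxiliary ingredient is the classical Gaussian-comparison lower bound
\[
\HH(u)\ge -C_d-\tfrac{1}{2}\int_{\R^d}|x|^2\,u(x)\,\d x,
\]
valid for every probability density $u$ with finite second moment; it follows at once from the nonnegativity of the relative entropy with respect to a standard Gaussian. I would apply this inequality to $u=u_\tau^N$ and then invoke the uniform second-moment bound \eqref{boundA}, already established in the proof of Theorem \ref{th:convergence1}, used with horizon $N\tau\le T+\tau$. This provides a control of $\int|x|^2 u_\tau^N$ in terms of $T$, $\FF_s(u_0)$ and $\int|x|^2 u_0$, and substituting back yields \eqref{RI1}. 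The inclusion $u_\tau(t)\in H^{1+s}(\R^d)$ for every $t>0$ then follows automatically, since $u_\tau$ is piecewise constant on intervals of length $\tau$ and each $u_\tau^k$, $k\ge 1$, already lies in $H^{1+s}(\R^d)$ by Lemma \ref{lemma:ed}.

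I do not foresee any serious obstacle in carrying out this plan. The only minor nuisance is the discrepancy between $T$ and $N\tau$: it is handled by a short case distinction, namely $\tau\le T$ (where $T+\tau\le 2T$ so the extra $\tau$ can be absorbed in a dimensional constant) and $\tau>T$ (where $N=1$ and the prefactor $T/\tau<1$ compensates for the extra $\tau$). Both cases produce a constant depending only on the dimension $d$.
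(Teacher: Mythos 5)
Your proposal follows essentially the same route as the paper: sum the per-step estimate \eqref{regestimate} over $k=1,\dots,N$ with $N\approx T/\tau$, telescope to $\HH(u_0)-\HH(u_\tau^N)$, and bound $-\HH(u_\tau^N)$ by combining the Gaussian-comparison lower bound on the entropy (the paper's \eqref{Carleman}) with the second-moment estimate \eqref{boundA}. Your explicit handling of the mismatch between $T$ and $N\tau$ is a small extra care the paper glosses over, but it does not change the argument.
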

\begin{proof}
%For the integer part of the real number $a$ we use the notation $[a]:=\max\{m\in\mathbb{Z}:m\le a\}$. 
Let $T>0$ and $N:= [T/\tau]+1$. 
Using \eqref{regestimate} and the definition of $u_\tau$ we obtain
\begin{equation}\label{RI2}
\int_{0}^T\|u_{\tau}(t)\|_{\dot H^{1+s}(\mathbb{R}^d)}^2\,\d t \leq \sum_{k={1}}^{N}\tau\|u^k_{\tau}\|_{\dot H^{1+s}(\mathbb{R}^d)}^2
\leq  \HH(u_0) -\HH(u_\tau^N).
\end{equation}
Using Jensen's inequality, it is not difficult to prove that (see for instance \cite{BCC08}),
\begin{equation}\label{Carleman}
	\HH(u) \geq -\frac{1}{e} - \frac{d}{2}\log(4\pi)-\frac{1}{4}\int_{\R^d}|x|^2u(x)\,\d x, \qquad \forall\,u\in D(\HH).
\end{equation}
By \eqref{Carleman} and \eqref{boundA}  we obtain
$$ -\HH(u_\tau^N)\leq C\Big(1+ T\FF_s(u_0)+\int_{\R^d}|x|^2 u_0(x)\,\d x \Big)$$
for $C$ depending only on the dimension $d$.
By the last inequality and \eqref{RI2} we have \eqref{RI1}. 
\end{proof}

\subsection{Improved convergence}
Thanks to the estimate of Corollary \ref{cor:HR} we obtain the following result of convergence.
This convergence will be fundamental in order to obtain the weak formulation of the equation in \eqref{equation}.

\begin{lemma}\label{lemma:convergence2}
Let $u_0\in  D(\FF_s)$,  $u_\tau$ the piecewise constant curve defined in \eqref{PC} for any $\tau>0$.
Given a vanishing sequence $\tau_n$, let
$u_{\tau_{n}}$ be a convergent subsequence (not relabeled) given by {\rm Theorem \ref{th:convergence1}} and
$u$ its limit curve.
Then, for any $T>0$ we have
$u\in L^2((0,T); H^{1+s}(\R^d))$ and
\begin{equation}\label{basicconv}
 u_{\tau_{n}}\to u \quad \mbox{strongly in } L^2((0,T);H^{1+r}({\R}^d)) \quad \mbox{ as $n\to\infty$, }  \forall \, r< s.
\end{equation}
%\[
% v_{\tau_{n}}\to v \quad \mbox{strongly in } L^2((0,T); \dot H^r({\R}^d)) \quad \mbox{ as $n\to\infty$, }  \forall \, r<1-s,
%\]
%\[
% u_{\tau_{n}}\to u \quad \mbox{strongly in } L^2((0,T);L^2({\R}^d)) \quad \mbox{ as $n\to\infty$,}
%\]
%\begin{equation}\label{convergenceVF}
% v_{\tau_{n}}\to v \quad \mbox{weakly in } L^2((0,T);H^{1-s}({\R}^d)) \quad \mbox{ as $n\to\infty$.}
%\end{equation}
\end{lemma}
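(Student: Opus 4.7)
The plan is to combine the space-time regularity of Corollary \ref{cor:HR} with pointwise-in-time strong $L^2$ convergence (obtained from the pointwise narrow convergence of Theorem \ref{th:convergence1} via Corollary \ref{cor:conv}), and then close the gap between $L^2$ and $H^{1+s}$ by interpolation.

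First I would upgrade the bound of Corollary \ref{cor:HR} from $L^2((0,T);\dot H^{1+s})$ to $L^2((0,T);H^{1+s})$. At each time $t$ the basic estimate \eqref{basicestimate} yields $\|u_{\tau_n}(t)\|_{\dot H^s}^2\leq 2\FF_s(u_0)$, and then the inequality appearing in the proof of Corollary \ref{cor:conv} gives a uniform $L^2$ bound $\|u_{\tau_n}(t)\|_{L^2}^2\leq C$ independent of $n$ and $t\in[0,T]$. Hence $\{u_{\tau_n}\}$ is bounded in $L^2((0,T);H^{1+s}(\R^d))$, and by $L^2$-weak compactness a further (not relabeled) subsequence converges weakly in this space to some $\tilde u$.

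The main subtlety is then the identification $\tilde u=u$, where a weak space-time limit must be reconciled with pointwise-in-time narrow limits. The key observation is that Theorem \ref{th:convergence1} together with the uniform $\dot H^s$-bound and Corollary \ref{cor:conv} upgrades the narrow convergence at each time slice to strong $L^2(\R^d)$ convergence $u_{\tau_n}(t)\to u(t)$ for every $t\in[0,T]$. Testing the weak convergence against any $\varphi\in C^\infty_c((0,T)\times\R^d)$ and applying dominated convergence to $\int_0^T\!\int u_{\tau_n}(t,x)\varphi(t,x)\,\d x\,\d t$ (the integrand is pointwise convergent and uniformly bounded) then gives $\tilde u=u$, so in particular $u\in L^2((0,T);H^{1+s}(\R^d))$.

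For the strong convergence, fix $r\in[0,s)$ and set $\theta:=(s-r)/(1+s)\in(0,1)$. The interpolation inequality \eqref{interpolation} gives
\[
\|u_{\tau_n}(t)-u(t)\|_{H^{1+r}}\leq \|u_{\tau_n}(t)-u(t)\|_{L^2}^{\theta}\,\|u_{\tau_n}(t)-u(t)\|_{H^{1+s}}^{1-\theta}.
\]
Squaring, integrating in $t$, and using H\"older with exponents $1/\theta$ and $1/(1-\theta)$ yields
\[
\int_0^T\|u_{\tau_n}(t)-u(t)\|_{H^{1+r}}^2\,\d t\leq\Big(\int_0^T\|u_{\tau_n}(t)-u(t)\|_{L^2}^2\,\d t\Big)^{\theta}\Big(\int_0^T\|u_{\tau_n}(t)-u(t)\|_{H^{1+s}}^2\,\d t\Big)^{1-\theta}.
\]
The second factor is uniformly bounded by the preceding paragraph, while the first factor vanishes as $n\to\infty$ by dominated convergence (pointwise $L^2$ convergence plus the uniform bound $\|u_{\tau_n}(t)-u(t)\|_{L^2}^2\leq 4C$). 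Cases with $r<0$ follow a fortiori from $r=0$.
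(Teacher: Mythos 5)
Your proof is correct and follows essentially the same route as the paper: the uniform bound from Corollary \ref{cor:HR}, the pointwise-in-time strong convergence in lower-order norms from Corollary \ref{cor:conv}, and then interpolation plus H\"older plus dominated convergence. The only cosmetic differences are that you interpolate between $L^2$ and $H^{1+s}$ rather than between $H^{r}$ and $H^{1+s}$, and that you identify $u\in L^2((0,T);H^{1+s}(\R^d))$ via weak compactness and a duality argument where the paper simply invokes lower semicontinuity; both variants are fine.
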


\begin{proof}
Let $r<s$. By \eqref{basicestimate} and lower semicontinuity we have 
\begin{equation}\label{Bbasic}
\| u_{\tau_n}(t)\|^{2}_{\dot H^{s}(\R^d)}\leq 2\FF_s(u_0), \quad \| u(t)\|^{2}_{\dot H^{s}(\R^d)}\leq 2\FF_s(u_0) 
\quad \forall\,t\in[0,+\infty).
\end{equation}
By \eqref{narrowconv} and Corollary \ref{cor:conv} we obtain
\begin{equation}\label{basicconv2}
\lim_{n\to+\infty}\| u_{\tau_n}(t) - u(t)\|^2_{H^{r}(\R^d)}= 0, \qquad \forall t\in[0,+\infty).
\end{equation}
By Corollary \ref{cor:HR} and lower semicontinuity we have 
\begin{equation}\label{RI12}
\int_{0}^T\|u(t)\|_{ \dot H^{1+s}(\mathbb{R}^d)}^2\,\d t \le \HH(u_0) + C\Big(1 +T\FF_s(u_0)+\int_{\R^d}|x|^2 u_0(x) \,\d x\Big)
\end{equation} 
for any $T>0$.

Using the interpolation \eqref{interpolation}, we can write
$$
 \| u_\tau(t) - u(t)\|_{H^{1+r}(\R^d)} \leq  \| u_\tau(t) - u(t)\|^{1-\theta}_{H^{r}(\R^d)}  \| u_\tau(t) - u(t)\|^{\theta}_{H^{1+s}(\R^d)},
$$
for $\theta=1/(1+s-r)\in(0,1)$ and for a.e. $t\in(0,+\infty)$. 
Fixing $T>0$, by H\"older's inequality we obtain
$$
\begin{aligned}
 & \int_{0}^T  \| u_{\tau_n}(t) - u(t)\|^2_{H^{1+r}(\R^d)}\,\d t  \\
& \le\int_{0}^T \| u_{\tau_n}(t) - u(t)\|^{2(1-\theta)}_{H^{r}(\R^d)} \| u_{\tau_n}(t) - u(t)\|^{2\theta}_{H^{1+s}(\R^d)}\,\d t\\
&\le\Big(\int_{0}^T \| u_{\tau_n}(t) - u(t)\|^2_{H^{r}(\R^d)}\,\d t\Big)^{1-\theta} \Big(\int_{0}^T\| u_{\tau_n}(t) - u(t)\|^{2}_{H^{1+s}(\R^d)}\,\d t\Big)^{\theta}.
\end{aligned}
$$
By estimate \eqref{RI1} and \eqref{RI12} the factor $\int_{0}^T\| u_{\tau_n}(t) - u(t)\|^{2}_{H^{1+s}(\R^d)}\,\d t$ is bounded.
Finally, by the previous inequality, \eqref{basicconv2} and \eqref{Bbasic} 
we obtain \eqref{basicconv} using dominated convergence.

\end{proof}

\subsection{Weak formulation  the equation for the discrete solution}

In order to obtain a sort of  weak formulation of the equation for the discrete solution, 
we use the flow interchange estimate with the $(-\lambda)$-flow generated by
a potential energy as in Proposition \ref{prop:PotentialFlow}.
%The weak formulation for $u_\tau$ is an approximation of the weak formulation \eqref{wequation} of the equation in \eqref{equation}.
Preliminarily we compute the derivative of the energy functional $\FF_s$ along the flow of a smooth vector field.

\begin{lemma}\label{lemma:DEL}
Let $\eta \in C^\infty_c(\R^d;\R^d)$ and $X_t:\R^d\to\R^d$, $t\in\R$, be the flow associated to $\eta$
defined, for any $x\in\R^d$ as the unique global solution of the problem
\begin{equation}\label{flowRd}
\begin{cases}
\frac{\d}{\d t} X_t(x)=\eta(X_t(x)), & t\in\R\\
X_0(x)=x.
\end{cases}
\end{equation}
Let $u\in H^{1+s}(\R^d)\cap\PP_2(\R^d)$ and $u_t:=(X_t)_\# u$.
Then the map $t\mapsto \FF_s(u_t)$ is differentiable at $t=0$ and 
\begin{equation}\label{eq:diffFeta}
\frac{\d}{\d t}  \FF_s(u_t)_{|t=0}=-N(u,\eta),
\end{equation}
where $N:H^{1+s}(\R^d)\times  C^\infty_c(\R^d;\R^d)\to \R$ is defined in \eqref{N11}.
%by
%\begin{equation}\label{N1}
%	N(u;\eta):=\begin{cases}
%	\displaystyle \int_{\R^d} ((\LL_{s-m} u) \LL_m(\div (\eta \, u))\,\d x, & \text{if } s\in(2m,2m+1], \\
%	\displaystyle \int_{\R^d} \nabla((\LL_{s-m-1} u)\cdot\nabla(\LL_m(\div (\eta \, u)))\,\d x, & \text{if } s\in(2m+1,2m+2].
%	\end{cases}
%	\end{equation}
\end{lemma}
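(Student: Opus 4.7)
The plan is to use a Hilbert-space chain rule for the quadratic functional $\FF_s$ on $H^s(\R^d)$, after establishing the continuity equation satisfied by the curve $t\mapsto u_t$ and its differentiability in $H^s$.

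First I would derive that $t\mapsto u_t$ satisfies, in the sense of distributions, the continuity equation $\partial_t u_t + \div(\eta\, u_t) = 0$ with $u_0 = u$. Differentiating the identity $\int \varphi\,u_t\,\d x = \int\varphi(X_t(y))\,u(y)\,\d y$ for $\varphi\in C^\infty_c(\R^d)$ and using $\tfrac{\d}{\d t}X_t(y)=\eta(X_t(y))$ gives this immediately. Next, because $\eta\in C^\infty_c(\R^d;\R^d)$, the flow $X_t$ is a $C^\infty$-diffeomorphism equal to $\mathrm{id}$ outside a compact set bounded uniformly in $t$ on compact time intervals, so the change-of-variable formula $u_t(x) = u(X_{-t}(x))\det\nabla X_{-t}(x)$ combined with standard composition and multiplication estimates in $H^{1+s}(\R^d)$ yields the uniform bound $\sup_{|t|\le t_0}\|u_t\|_{H^{1+s}(\R^d)}<\infty$. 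In particular $\eta u_t\in H^{1+s}(\R^d)$ and hence $\div(\eta u_t)\in H^s(\R^d)$; this allows one to show that $t\mapsto u_t$ is differentiable at $t=0$ as an $H^s$-valued curve with derivative $-\div(\eta u)\in H^s(\R^d)$.

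Since $\FF_s$ is a continuous quadratic form on $H^s(\R^d)$, it is Fr\'echet differentiable with $D\FF_s(v)[w]=\langle v,w\rangle_{\dot H^s(\R^d)}$, and the chain rule yields
\[
\frac{\d}{\d t}\FF_s(u_t)\Big|_{t=0} = -\langle u,\,\div(\eta u)\rangle_{\dot H^s(\R^d)} = -\frac{1}{(2\pi)^d}\int_{\R^d}|\xi|^{2s}\,\hat u(\xi)\,\overline{\widehat{\div(\eta u)}(\xi)}\,\d\xi.
\]
It remains to rewrite this pairing as $N(u,\eta)$ by splitting the symbol $|\xi|^{2s}$ according to the two cases. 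If $s\in[2m,2m+1]$, I write $|\xi|^{2s}=|\xi|^{2(s-m)}\cdot|\xi|^{2m}$; a direct regularity count using $u\in H^{1+s}$ gives $\LL_{s-m}u\in H^{1-s+2m}(\R^d)\subset L^2(\R^d)$ and $\LL_m\div(\eta u)\in H^{s-2m}(\R^d)\subset L^2(\R^d)$, so Plancherel produces $\int \LL_{s-m}u\cdot\LL_m\div(\eta u)\,\d x$, matching the first line of $N$. If $s\in(2m+1,2m+2)$, I instead split $|\xi|^{2s}=|\xi|^{2(s-m-1)}\cdot|\xi|^2\cdot|\xi|^{2m}$ and use $|\xi|^2=(i\xi)\cdot\overline{(i\xi)}$; the factors $i\xi\,\widehat{\LL_{s-m-1}u}(\xi)=\widehat{\nabla\LL_{s-m-1}u}(\xi)$ and $i\xi\,\widehat{\LL_m\div(\eta u)}(\xi)=\widehat{\nabla\LL_m\div(\eta u)}(\xi)$ are $L^2$ vector fields since $\LL_{s-m-1}u,\LL_m\div(\eta u)\in H^1(\R^d)$ in this regularity range, and componentwise Plancherel yields $\int\nabla(\LL_{s-m-1}u)\cdot\nabla(\LL_m\div(\eta u))\,\d x$, matching the second line of $N$.

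The main technical obstacle is showing that the push-forward curve $t\mapsto u_t$ is genuinely differentiable at $t=0$ in $H^s(\R^d)$ (and that $u_t$ inherits the $H^{1+s}$-regularity of $u_0$ uniformly in $t$). This amounts to Sobolev composition/multiplication estimates with the smooth diffeomorphisms $X_{\pm t}$, which are classical but require some care for non-integer $s$; the compact support of $\eta$, forcing $X_t=\mathrm{id}$ outside a bounded set, makes these estimates tractable. All subsequent steps are algebraic Fourier manipulations that split $|\xi|^{2s}$ according to whether $s/2-m$ lies below or above $\tfrac12$.
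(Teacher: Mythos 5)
Your proposal is correct and arrives at the same formula through the same basic ingredients (the continuity equation $\partial_t u_t=-\div(\eta u_t)$, the uniform bound $\sup_{|t|\le t_0}\|u_t\|_{H^{1+s}}<\infty$ via composition/multiplication estimates, and the case-split Plancherel identification of the pairing with $N(u,\eta)$; your regularity counts $\LL_{s-m}u\in L^2$, $\LL_m\div(\eta u)\in L^2$, resp.\ $\LL_{s-m-1}u,\LL_m\div(\eta u)\in H^1$, all check out). Where you genuinely diverge from the paper is in how the limit $t\to 0$ is justified. You assert that $t\mapsto u_t$ is \emph{norm}-differentiable at $t=0$ in $H^s(\R^d)$ and then invoke the chain rule for the Fr\'echet-differentiable quadratic form $\FF_s$. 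That claim is true but is the strongest (and least automatic) step of your argument: since $\div(\eta\,\cdot)$ loses a full derivative, writing $(u_t-u)/t+\div(\eta u)=\frac1t\int_0^t\div(\eta(u-u_r))\,\d r$ shows you need $u_r\to u$ \emph{strongly in $H^{1+s}$}, which does not follow from the uniform bound plus narrow convergence alone; it requires a density argument (approximate $u$ by $C^\infty_c$ functions and use the uniform operator norm of $v\mapsto v\circ X_{-r}\det\nabla X_{-r}$ on $H^{1+s}$). The paper deliberately avoids proving any norm-differentiability of the curve: it works directly with the difference quotient of $\FF_s$, written as a bilinear pairing of $u_t+u$ against $(u_t-u)/t$ with the symbol split asymmetrically between the two factors, and then uses only (i) uniform $H^{1+s}$ and difference-quotient $H^s$ bounds, (ii) pointwise convergence of the Fourier transforms, and (iii) its interpolation-compactness Lemma~\ref{lemma:conv} to obtain strong convergence of one factor and weak convergence of the other in exactly the borderline cases $s=2m$ and $s=2m+1$. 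Your route buys a cleaner, more conceptual statement (genuine differentiability of the curve plus a Hilbert-space chain rule) at the cost of a harder continuity lemma; the paper's route is more robust at the endpoints because it never needs strong convergence in the top space. Either way the conclusion \eqref{eq:diffFeta} follows, so I consider your plan sound provided you actually supply the $H^{1+s}$-strong-continuity step you flagged as the ``main technical obstacle.''
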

\begin{proof}
Since $\eta \in C^\infty_c(\R^d;\R^d)$, then 
for any $t\in\R$, the map $X_t$ is a $C^\infty$ diffeomorphism of $\R^d$ and $X_t^{-1}=X_{-t}$.
 Moreover if $x\not\in\supp\,\eta$, then $X_t(x)=x$.
Since 
$$
\begin{cases}
\frac{\d}{\d t} \nabla X_t=\nabla \eta(X_t) \nabla X_t , & t\in\R\\
\nabla X_0=I,
\end{cases}
$$
where we used the notation $\nabla\eta$ and $\nabla X_t$ for the Jacobian matrices of $\eta$ and $X_t$,
there exists a constant $L>0$ such that
$$|X_t(x)-X_t(y)|\leq L |x-y|, \qquad \forall\,x,y\in\R^d,\quad \forall\,t\in[-1,1].$$
Recalling the formula \eqref{densityPF}, $u_t(x)= u(X_{-t}(x))\det(\nabla X_{-t}(x))$.
Observing that the map $x\mapsto \det(\nabla X_{-t}(x))$ belongs to $C^\infty(\R^d;\R)$ 
and  $\det(\nabla X_{-t}(x))=1$ for any $x\not\in \supp\,\eta$,
there exists a constant $C>0$ such that
\begin{equation}\label{eq:BE}
 \|u_t\|_{H^{1+s}(\R^d)}\leq C  \|u\|_{H^{1+s}(\R^d)}, \qquad \forall t\in [-1,1].
 \end{equation}
See, for instance, \cite[Corollary 1.60 and Theorem 1.62]{BCD}.

Using the formula 
$|a|^2-|b|^2=(\bar a+\bar b)(a-b)+\bar ab-\bar ba$ valid for $a,b\in\C$, 
by \eqref{Fconj} we have
\begin{equation}\label{Fdiff}
\FF_s(u_t)-\FF_s(u) =
\frac12 \frac1{(2\pi)^d}\int_{\R^d}|\xi|^{2s}\big(\hat u_t(-\xi)+\hat u(-\xi)\big)\big(\hat u_t(\xi)-\hat u(\xi)\big)\,\d\xi,
\end{equation}
because
$$ \int_{\R^d}|\xi|^{2s}\hat u_t(-\xi)\hat u(\xi)\,\d\xi
= \int_{\R^d}|\xi|^{2s}\hat u_t(\xi)\hat u(-\xi)\,\d\xi .$$

Let $m=[s/2]$. 
If $s\in[2m,2m+1]$, by \eqref{Fdiff}, using the Plancherel identity \eqref{Plancherel} and the definition \eqref{defLs}, 
we obtain
\begin{equation}\label{elv1}
 \begin{aligned}  \frac{\FF_s(u_t)-\FF_s(u)}{t} &=
\frac12 \frac1{(2\pi)^d} \int_{\R^d}|\xi|^{2(s-m)}\big(\hat u_t(-\xi)+\hat u(-\xi)\big)\frac{|\xi|^{2m}(\hat u_t(\xi)-\hat u(\xi))}{t}\,\d\xi \\
&= \frac12 \int_{\R^d} \LL_{s-m}(u_t + u)  \LL_m\left(\frac{u_t-u}{t}\right)\,\d x .
\end{aligned}
\end{equation}
Analogously, if $s\in(2m+1,2m+2)$, we write
\begin{equation}\label{elv2}
 \begin{aligned}   \frac{\FF_s(u_t)-\FF_s(u)}{t} &=
\frac12 \frac1{(2\pi)^d} \int_{\R^d}\xi|\xi|^{2(s-m-1)}\big(\hat u_\delta(-\xi)+\hat u(-\xi)\big)\cdot\xi\frac{|\xi|^{2m}(\hat u_\delta(\xi)-\hat u(\xi))}{\delta}\,\d\xi\\
& = \frac12 \int_{\R^d} \nabla\LL_{s-m-1}(u_t + u) \cdot\nabla \LL_m\left(\frac{u_t-u}{t}\right)\,\d x .
\end{aligned}
\end{equation}
Moreover $u_t \to u$ narrowly as $t\to 0$. Indeed, for $\varphi:\R^d\to\R$ continuous and bounded, 
by the definition of $(X_t)_\#u$ and dominated convergence theorem we have that
$\int_{\R^d}\varphi(x)u_t(x)\,\d x= \int_{\R^d}\varphi(X_t(x))u(x)\,\d x \to \int_{\R^d}\varphi(x)u(x)\,\d x$ as $t\to 0$.

Thanks to \eqref{eq:BE} and the narrow convergence of $u_t$ to $u$ we can apply Lemma \ref{lemma:conv}
obtaining that
\begin{equation}\label{cC1}\begin{split}
 &\|\LL_{s-m}u_t-\LL_{s-m}u\|_{L^2(\R^d)}\to 0 \qquad \mbox{if } s\in[2m,2m+1),\\
&\LL_{s-m}u_t\to \LL_{s-m}u \qquad \mbox{weakly in } L^2(\R^d) \quad \mbox{if } s=2m+1,\\
&\|\LL_{s-m-1}u_t-\LL_{s-m-1}u\|_{H^1(\R^d)}\to 0 \qquad \mbox{if } s\in(2m+1,2m+2),
\end{split}\end{equation}
as $t\to 0$.
%Analogously we have $\LL_{s-m-1}u_t\to \LL_{s-m-1}u$ in $H^1(\R^d)$ strongly if $s\in(2m+1,2m+2)$ 
%and weakly if $s=2m+2$.

For every $\xi\in\R^d$
we define $g_\xi:\R\to\R$ by $g_\xi(t):= \hat u_t(\xi)$.
We prove that $g_\xi\in C^1(\R)$
and 
\begin{equation}\label{gder}
 g'_\xi(t)= -\widehat{\div(\eta u_t)}(\xi).
\end{equation}
Indeed, by definition of image measure, we have
$$ g_\xi(t)=\hat u_t(\xi)=\int_{\R^d}e^{-i\xi\cdot X_t(x)}u(x)\,\d x.$$
Using this expression, by dominated convergence Theorem, we have that
$$
\begin{aligned}
\frac{g_\xi(t+h)-g_\xi(t)}{h} &=\int_{\R^d}\frac1h(e^{-i\xi\cdot X_{t+h}(x)}- e^{-i\xi\cdot X_t(x)}) u(x)\,\d x
 \\
&\to \int_{\R^d} e^{-i\xi\cdot X_t(x)}(-i\xi\cdot\eta(X_t(x)))u(x)\,\d x 
\end{aligned}
$$
as $h\to 0$.
Moreover, taking into account  the definition of image measure and \eqref{Frule},
\begin{equation}\label{divetau}
\int_{\R^d} e^{-i\xi\cdot X_t(x)}(-i\xi\cdot\eta(X_t(x)))u(x)\,\d x = \int_{\R^d} e^{-i\xi\cdot x}(-i\xi\cdot\eta(x))u_t(x)\,\d x = -\widehat{\div(\eta u_t)}(\xi).
\end{equation}
The continuity of $g'_\xi$ follows from the expression above and the regularity of the maps $t\mapsto X_t(x)$,
using dominated convergence Theorem.

Using fundamental theorem of calculus and Jensen's inequality, we have
\begin{equation}\label{eq:BEaus}
\begin{split}
\left\|\frac{u_{t}-  u}{t} \right\|^2_{H^s(\R^d)} &=\int_{\R^d}(1+|\xi|^2)^s\left| \frac{\hat u_t(\xi)-\hat u(\xi)}{t}\right|^2\,\d\xi \\
& = \int_{\R^d}(1+|\xi|^2)^s\left| \frac{g_\xi(t)-g_\xi(0)}{t}\right|^2\,\d\xi \\
&= \int_{\R^d}(1+|\xi|^2)^s\left| \frac{1}{t}\int_0^t g'_\xi(r)\,\d r \right|^2\,\d\xi \\
&\leq  \int_{\R^d}(1+|\xi|^2)^s\frac{1}{t}\int_0^t \left| g'_\xi(r)\right|^2\,\d r \,\d\xi \\
&=  \frac{1}{t}\int_0^t \int_{\R^d}(1+|\xi|^2)^s\left| g'_\xi(r)\right|^2 \,\d\xi \,\d r\\
&=  \frac{1}{t}\int_0^t \int_{\R^d}(1+|\xi|^2)^s\left|\widehat{\div(\eta u_r)}(\xi)\right|^2 \,\d\xi \,\d r\\
&=  \frac{1}{t}\int_0^t  \| \div(\eta u_r)\|^2_{H^{s}(\R^d)} \d r.
\end{split}
\end{equation}
From the estimate \eqref{eq:BE} it follows that there exists $C>0$, depending on $\eta$, such that
\begin{equation}\label{eq:BE1}
  \|\div(\eta u_r)\|_{H^{s}(\R^d)} \leq \tilde C \|u_r\|_{H^{1+s}(\R^d)}\leq C  \|u\|_{H^{1+s}(\R^d)}, \qquad \forall r\in [-1,1].
\end{equation}
By \eqref{eq:BEaus} and \eqref{eq:BE1} it follows that 
\begin{equation}\label{Diffestim}
\left\|\frac{u_{t}-  u}{t} \right\|_{H^s(\R^d)}\leq C \|u\|_{H^{1+s}(\R^d)}, \qquad \forall t\in [-1,1], \; t\neq 0.
\end{equation}
Moreover, by Lagrange mean value, \eqref{gder} and \eqref{divetau} we obtain
\begin{equation}\label{Diffestim2}
\left|\frac{\hat u_{t}(\xi)-\hat u(\xi)}{t} \right|  \leq  |\xi|\|\eta\|_\infty \qquad \forall \xi\in\R^d,\; \forall t\in [-1,1], \; t\neq 0.
\end{equation}

Since, by \eqref{gder}, $\lim_{t\to 0} \frac{\hat u_{t}(\xi)- \hat u(\xi)}{t} = -\widehat{\div(\eta u)}(\xi)$ for any $\xi\in\R^d$, 
and \eqref{Diffestim}  \eqref{Diffestim2} hold,
we can apply Lemma \ref{lemma:conv} and we obtain
\begin{equation}\label{cC2}\begin{split}
&\LL_{m}\Big(\frac{u_{t}-  u}{t}\Big)\to \LL_{m}(-\div(\eta u)) \qquad \mbox{weakly in } L^2(\R^d) \quad \mbox{if } s=2m,\\
 &\left\|\LL_{m}\Big(\frac{u_{t}-  u}{t}\Big)- \LL_{m} (-\div(\eta u))\right\|_{L^2(\R^d)}\to 0 \qquad \mbox{if } s\in(2m,2m+1],\\
&\left\|\nabla\LL_{m}\Big(\frac{u_{t}-  u}{t}\Big)- \nabla\LL_{m} (-\div(\eta u))\right\|_{L^2(\R^d)}\to 0 \qquad \mbox{if } s\in(2m+1,2m+2),
\end{split}\end{equation}
as $t\to 0$.
%\begin{equation}\begin{split}
%&\LL_{m}\frac{u_{t}-  u}{t}\to \LL_{m} \div(\eta u) \qquad \mbox{weakly in } L^2(\R^d) \quad \mbox{if } s=2m,\\
% &\|\LL_{m}\frac{u_{t}-  u}{t}- \LL_{m} \div(\eta u)\|_{H^r(\R^d)}\to 0 \qquad \mbox{if } s\in(2m,2m+1], \quad 0\leq r<s-2m \\
%&\|\nabla\LL_{m}\frac{u_{t}-  u}{t}- \nabla\LL_{m} \div(\eta u)\|_{H^r(\R^d)}\to 0 \qquad \mbox{if } s\in(2m+1,2m+2),
%\quad 0\leq r<s-2m-1,
%\end{split}\end{equation}
%as $t\to 0$.

Finally, using \eqref{cC1} and \eqref{cC2} we pass to the limit in \eqref{elv1} and  \eqref{elv2} and we obtain
\begin{equation}\label{ending1}
	 \lim_{t \to 0}\frac1t \left( \FF_s(u_t)-\FF_s(u)\right) = 
	-\int_{\R^d} (\LL_{s-m} u) \LL_m(\div (\eta \, u))\,\d x 
\end{equation}
if  $s\in[2m,2m+1]$
and
\begin{equation}\label{ending2}
	 \lim_{t \to 0}\frac1t \left( \FF_s(u_t)-\FF_s(u)\right) = 
	-\int_{\R^d} \nabla((\LL_{s-m-1} u)\cdot\nabla(\LL_m(\div (\eta \, u)))\,\d x \qquad 
\end{equation}	
if $s\in(2m+1,2m+2).$

By \eqref{ending1} and \eqref{ending2} we obtain \eqref{eq:diffFeta}.
\end{proof}

The application of the Flow interchange estimate with the flow generated by a potential energy yields the following
Proposition. We observe that the inequality \eqref{eq:DD} is a sort of discrete weak formulation of the equation in \eqref{equation} 
(see \eqref{eq:DD3} and \eqref{eq5}).

\begin{proposition}
Let $u_0\in  D(\FF_s)$, $\tau>0$, $\{u_\tau^k:k=0,1,2,\ldots\}$ the sequence given by {\rm Proposition \ref{prop:existenceMM}}.
Let $\varphi\in C^\infty_c(\R^d)$ and $\lambda \geq \|D^2\varphi\|_\infty$.
Then 
\begin{equation}\label{eq:DD}
\begin{aligned}
	-\frac{\lambda}{2} W^2(u_\tau^n,u_\tau^{n-1}) & \leq \int_{\R^d} \varphi(x) u_\tau^n(x)\,\d x -  \int_{\R^d} \varphi(x) u_\tau^{n-1}(x)\,\d x
	-\tau N(u_\tau^n,\nabla\varphi) \\
	&\leq \frac{\lambda}{2} W^2(u_\tau^n,u_\tau^{n-1}), \qquad \forall\,n\in\N,
	\end{aligned}
\end{equation}
where $N$ is defined in \eqref{N11}.
\end{proposition}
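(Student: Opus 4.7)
The plan is to apply the flow interchange estimate (Proposition~4.3) to the potential-energy functional $\VV(u) := \int_{\R^d} \varphi \, du$, exploiting the identification provided by Lemma~4.6 of the derivative of $\FF_s$ along smooth-vector-field flows. By Proposition~4.4, this $\VV$ generates a $(-\lambda)$-flow $S_t(u) = (X_t)_\# u$ with $X_t$ the flow of the vector field $-\nabla\varphi$. Moreover, Lemma~4.7 guarantees $u_\tau^n \in H^{1+s}(\R^d)\cap\PP_2(\R^d)$ for $n\ge 1$, and clearly $u_\tau^n \in D(\VV)$ since $\varphi$ is bounded, so the hypotheses of Proposition~4.3 are satisfied.

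Applying Proposition~4.3 (with $\lambda$ there replaced by $-\lambda$) gives
\begin{equation*}
\mathfrak{D}_\VV \FF_s(u_\tau^n) - \frac{\lambda}{2\tau} W^2(u_\tau^n, u_\tau^{n-1}) \le \frac{\VV(u_\tau^{n-1}) - \VV(u_\tau^n)}{\tau}.
\end{equation*}
To identify the dissipation, I invoke Lemma~4.6 with $\eta := -\nabla\varphi$: the map $t \mapsto \FF_s(S_t u_\tau^n)$ is classically differentiable at $t=0$ with derivative $-N(u_\tau^n, -\nabla\varphi)$. Since $\div(-\nabla\varphi \cdot v) = -\div(\nabla\varphi \cdot v)$ and $\LL_m$ is linear, the map $\eta\mapsto N(v,\eta)$ is odd, whence the derivative equals $N(u_\tau^n,\nabla\varphi)$ and consequently $\mathfrak{D}_\VV \FF_s(u_\tau^n) = -N(u_\tau^n,\nabla\varphi)$ (the sign comes from the definition \eqref{def:DVF}, which uses $\FF_s(u)-\FF_s(S_t u)$, not the reverse). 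Multiplying through by $\tau$ and rearranging yields
\begin{equation*}
\VV(u_\tau^n) - \VV(u_\tau^{n-1}) - \tau N(u_\tau^n,\nabla\varphi) \le \frac{\lambda}{2} W^2(u_\tau^n,u_\tau^{n-1}),
\end{equation*}
which is the right-hand inequality in \eqref{eq:DD}.

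For the left-hand inequality I would run the same argument with $\varphi$ replaced by $-\varphi$. Since $\|D^2(-\varphi)\|_\infty = \|D^2\varphi\|_\infty \le \lambda$, Proposition~4.4 again applies: $\tilde\VV := -\VV$ generates a $(-\lambda)$-flow $\tilde S_t(u) = (\tilde X_t)_\# u$ whose driving vector field is now $+\nabla\varphi$. Lemma~4.6 with $\eta = \nabla\varphi$ gives $\mathfrak{D}_{\tilde\VV}\FF_s(u_\tau^n) = N(u_\tau^n,\nabla\varphi)$, and the flow interchange inequality applied to $\tilde\VV$ produces
\begin{equation*}
\tau N(u_\tau^n,\nabla\varphi) - \frac{\lambda}{2} W^2(u_\tau^n,u_\tau^{n-1}) \le -\VV(u_\tau^{n-1})+\VV(u_\tau^n),
\end{equation*}
i.e.\ the lower bound in \eqref{eq:DD}. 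The only real subtlety in the whole argument is sign bookkeeping: keeping track of the minus sign in Proposition~4.4 (the flow is of $-\nabla\varphi$), the oddness of $\eta\mapsto N(\cdot,\eta)$, and the orientation of the difference quotient in the definition of $\mathfrak{D}_\VV\FF_s$. No new analytic estimate is needed; once Lemma~4.6 is in place, \eqref{eq:DD} follows directly from a pair of symmetric applications of the flow-interchange lemma.
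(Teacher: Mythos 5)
Your proof is correct and follows essentially the same route as the paper: apply the flow interchange estimate to the potential energy $\VV(u)=\int\varphi\,\d u$ and to $-\VV$ (both generating $(-\lambda)$-flows by Proposition \ref{prop:PotentialFlow}), identify the dissipation via Lemma \ref{lemma:DEL} with $\eta=\mp\nabla\varphi$ using the linearity of $N$ in $\eta$, and combine the two resulting inequalities. The sign bookkeeping and the verification of the hypotheses (in particular $u_\tau^n\in H^{1+s}(\R^d)$ from Lemma \ref{lemma:ed}) are all handled as in the paper.
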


\begin{proof}
Let us define the functional $\VV:\PP_2(\R^d)\to \R$ by 
$$\VV(u):=\int_{\R^d}\varphi(x)\,\d u(x).$$
Let $n\in\N$. Since by Lemma \ref{lemma:ed}  $u_\tau^n\in H^{1+s}(\R^d)$,
by Proposition \ref{prop:PotentialFlow}
and Lemma \ref{lemma:DEL} for $\eta=-\nabla\varphi$,
we have
\begin{equation*}
 \mathfrak{D}_\VV \FF_s(u_\tau^n):= \limsup_{t \downarrow 0} \frac{\FF_s(u_\tau^n)-\FF_s(S_t(u_\tau^n))}{t} = -\frac{\d}{\d t}  \FF_s(S_t(u_\tau^n))_{|t=0}
 =N(u_\tau^n,-\nabla\varphi),
\end{equation*}
Applying Proposition \ref{prop:FI} to $\VV$  and observing that $N(u_\tau^n,-\nabla\varphi)=-N(u_\tau^n,\nabla\varphi)$,
we obtain
\begin{equation}\label{eq:DD1}
\begin{aligned}
	-\frac{\lambda}{2} W^2(u_\tau^n,u_\tau^{n-1}) - \tau N(u_\tau^n,\nabla\varphi)
	 \leq \VV(u_\tau^{n-1}) - \VV(u_\tau^n)   .
		\end{aligned}
\end{equation}
Analogously, applying Proposition \ref{prop:FI} to $-\VV$ instead of $\VV$ and observing that $-\VV$ still generates 
a $-\lambda$-flow we obtain
\begin{equation}\label{eq:DD2}
\begin{aligned}
	-\frac{\lambda}{2} W^2(u_\tau^n,u_\tau^{n-1}) + \tau N(u_\tau^n,\nabla\varphi)
	 \leq -\VV(u_\tau^{n-1}) + \VV(u_\tau^n)  .
		\end{aligned}
\end{equation}
Finally, the inequality \eqref{eq:DD} follows by \eqref{eq:DD1} and \eqref{eq:DD2}. 
\end{proof}

\subsection{Solution of the problem}

In this Section we prove that the limit curve given by Theorem \ref{th:convergence1} is 
a weak solution of problem \eqref{equation} and we conclude the proof of Theorem \ref{th:main}.

\begin{theorem}
\label{th:weak_form}
If $u\in AC^2([0,+\infty);(\PP_2({\R}^d),W))$ is a limit curve given by {\rm Theorem \ref{th:convergence1}}, 
then $u$ is a solution of the equation in \eqref{equation} in the following  weak form:
for any $\varphi\in C^\infty_c((0,+\infty)\times\R^d)$
\begin{equation}\label{wf}
\int_0^{+\infty}\int_{\R^d} \partial_t\varphi u \,\d x\,\d t+
\int_0^{+\infty}N(u(t),\nabla\varphi(t,\cdot)) \, \d t=0, 
\end{equation}
where $N$ is defined in \eqref{N11}.

\end{theorem}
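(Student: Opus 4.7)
The strategy is to start from the discrete weak formulation \eqref{eq:DD}, apply it at each time step with a time-slice $\varphi(n\tau,\cdot)$ of the test function, sum over steps using Abel summation, and pass to the limit along the subsequence given by Theorem \ref{th:convergence1}.

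Fix $\varphi \in C^\infty_c((0,+\infty)\times\R^d)$ with support in $(\alpha, T) \times \R^d$ for some $0 < \alpha < T$, and set $\lambda := \sup_t \|D^2 \varphi(t,\cdot)\|_\infty < +\infty$. For $\tau < \alpha$ and each $n \geq 1$, apply \eqref{eq:DD} with the time-independent test function $\varphi(n\tau,\cdot)$. Summing for $n = 1, \dots, N_\tau$ with $N_\tau := [T/\tau] + 1$ and rearranging via Abel summation (the boundary terms vanish since $\varphi(\tau,\cdot) \equiv 0$ and $\varphi(N_\tau \tau, \cdot) \equiv 0$) yields
\begin{equation*}
\biggl|{-}\sum_{n=1}^{N_\tau-1}\tau \int_{\R^d} \frac{\varphi((n{+}1)\tau, x) - \varphi(n\tau, x)}{\tau}\, u_\tau^n(x)\, dx - \sum_{n=1}^{N_\tau} \tau\, N(u_\tau^n, \nabla\varphi(n\tau,\cdot)) \biggr| \le \frac{\lambda}{2}\sum_{n=1}^{N_\tau} W^2(u_\tau^n, u_\tau^{n-1}).
\end{equation*}
From the energy estimate \eqref{basicestimate}, $\sum_n W^2(u_\tau^n, u_\tau^{n-1}) \le 2\tau \FF_s(u_0)$, so the right-hand side is $O(\tau)$ and vanishes as $\tau \to 0$.

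For the first sum, the smoothness of $\varphi$ gives uniform convergence of the discrete time derivative to $\partial_t \varphi$. Combined with the pointwise convergence $u_{\tau_n}(t) \to u(t)$ in $L^2(\R^d)$ provided by Theorem \ref{th:convergence1} together with Corollary \ref{cor:conv}, and the uniform second moment control from \eqref{boundA} (which enforces tightness and allows dominated convergence through the compact spatial support of $\varphi$), the first sum converges to $-\int_0^{+\infty}\int_{\R^d} \partial_t \varphi(t,x)\, u(t,x)\, dx\, dt$.

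The main obstacle is the passage to the limit in the nonlinear sum $\sum_n \tau N(u_\tau^n, \nabla\varphi(n\tau,\cdot)) \to \int_0^{+\infty} N(u(t), \nabla\varphi(t,\cdot))\, dt$, since $N(\cdot,\eta)$ is quadratic and involves top-order fractional derivatives of $u$. I would first exploit Plancherel and the identity $\int (\LL_a v)(\LL_b w)\, dx = \int (\LL_{a+b} v) w\, dx$ (an immediate consequence of \eqref{defLs}, \eqref{Plancherel}, \eqref{Frule}) to unify both branches of \eqref{N11} as
\[
N(v,\eta) = \int_{\R^d} \LL_s v \cdot \div(\eta v)\, dx,
\]
realized as the $H^{1-s}$--$H^{s-1}$ duality pairing. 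Splitting the difference $N(u_{\tau_n},\eta) - N(u,\eta)$ bilinearly into cross terms involving $\LL_s(u_{\tau_n}-u) \cdot \div(\eta u_{\tau_n})$ and $\LL_s u \cdot \div(\eta(u_{\tau_n}-u))$, integration in $t$ uses: on the one hand, the weak convergence $u_{\tau_n} \rightharpoonup u$ in $L^2((0,T); H^{1+s})$ (from the uniform bound \eqref{RI1}) yielding weak convergence of $\LL_s u_{\tau_n}$ in $L^2((0,T); H^{1-s})$; on the other, the strong convergence of $\div(\eta u_{\tau_n})$ in $L^2((0,T); H^r)$ for any $r \in (s-1, s)$ — in particular strongly in $L^2((0,T); H^{s-1})$ — from Lemma \ref{lemma:convergence2}. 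Each cross term is then handled by a weak--strong duality pairing in $H^{1-s}$--$H^{s-1}$, while the uniform convergence $\nabla \varphi(\tau[t/\tau], \cdot) \to \nabla\varphi(t,\cdot)$ in $C^\infty_c$ absorbs the slowly varying test function. Assembling the limits gives \eqref{wf}.
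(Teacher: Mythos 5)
Your proposal is correct and follows essentially the same route as the paper: both start from the discrete inequality \eqref{eq:DD} tested with slices of $\varphi$, sum/integrate in time (your Abel summation is the discrete counterpart of the paper's shifting of the difference quotient onto $\varphi$), kill the error term via \eqref{basicestimate}, and pass to the limit using the convergences of Theorem \ref{th:convergence1} and Lemma \ref{lemma:convergence2}. The only real difference is in bookkeeping for the nonlinear term: you unify the two branches of \eqref{N11} into a single $H^{1-s}$--$H^{s-1}$ duality pairing and run one weak--strong argument, whereas the paper keeps the case distinction and distributes the fractional derivatives so that both factors converge in $L^2((0,T);L^2(\R^d))$ (strongly, or weakly at the endpoint $s=2m+1$); both are valid.
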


\begin{proof}
Let $\varphi\in C^\infty_c((0,+\infty)\times\R^d)$, $T>0$ such that $\varphi(t,\cdot)=0$ for any $t>T$.
Let $\lambda \geq \max_{t\in[0,T]}\|D^2\varphi(t,\cdot)\|_\infty$.

Using the notation $u_\tau(t,x):=u_\tau(t)(x)$ and the convention $u_\tau(t):=u_0$ if $t<0$, 
the inequality \eqref{eq:DD} can be rewritten as
\begin{equation}\label{eq:DDbis}
\begin{aligned}
	&-\frac{\lambda}{2} W^2(u_\tau(t),u_\tau(t-\tau)) \\
	& \leq \int_{\R^d} \varphi(t,x) (u_\tau(t,x) - u_\tau(t-\tau,x))\,\d x
	-\tau N(u_\tau(t),\nabla\varphi(t,\cdot)) \\
	&\leq \frac{\lambda}{2} W^2(u_\tau(t),u_\tau(t-\tau)), \qquad \forall \,t\in[0,+\infty), \quad \forall \,\tau>0.
	\end{aligned}
\end{equation}
Dividing the inequality in \eqref{eq:DDbis} by $\tau>0$ and integrating in time, we obtain
\begin{equation}\label{eq:DD3}
\begin{aligned}
	&\left| \int_0^T \int_{\R^d} \frac{\varphi(t,x)-\varphi(t+\tau,x)}{\tau} u_\tau(t,x) \,\d x\,\d t
	-\int_0^T N(u_\tau(t),\nabla\varphi(t,\cdot))\,\d t \right| \\
	&\leq \frac{\lambda}{2\tau} \int_0^T W^2(u_\tau(t),u_\tau(t-\tau))\,\d t.
	\end{aligned}
\end{equation}
We observe that the inequality \eqref{eq:DD3} is a discrete weak formulation of the equation \eqref{equation}.

Let $\tau_n$ be a vanishing sequence given by Theorem \ref{th:convergence1}.

First of all we show that
\begin{equation}\label{eq5}
	\lim_{n\to+\infty}\frac{\lambda}{2\tau_n} \int_0^T W^2(u_{\tau_n}(t),u_{\tau_n}(t-\tau_n))\,\d t = 0.
\end{equation}
Indeed, by \eqref{basicestimate}
\begin{equation*}\label{eq:?}
\begin{aligned}
	&\frac{1}{2\tau_n} \int_0^T W^2(u_{\tau_n}(t),u_{\tau_n}(t-\tau_n))\,\d t \leq	
	\frac{1}{2}\sum_{k=1}^{[T/\tau_n]+1} W^2(u^k_{\tau_n},u_{\tau_n}^{k-1}) \leq \tau_n \FF_s(u_0)
			\end{aligned}
\end{equation*}
and \eqref{eq5} follows.

We pass to the limit in the other two terms in \eqref{eq:DD3}.
By the convergence \eqref{basicconv} and the regualrity of $\varphi$ it follows that
\begin{equation}\label{eq3}
	\lim_{n\to+\infty} \int_0^T \int_{\R^d} \frac{\varphi(t,x)-\varphi(t+\tau_n)}{\tau_n} u_{\tau_n}(t,x) \,\d x\,\d t = -\int_0^T \int_{\R^d} \de_t\varphi(t,x) u(t,x) \,\d x\,\d t.
\end{equation}
Let $m=[s/2]$.
For $s\in[2m,2m+1]$, 
by definition of $N$,
\begin{equation}\label{euler3}
	\int_0^T N(u_{\tau_n}(t),\nabla\varphi(t,\cdot))\,\d t
	= \int_0^{T}\int_{\R^d} \LL_{s-m} (u_{\tau_n}) \LL_m(\div (\nabla\varphi \, u_{\tau_n}))\,\d x\,\d t.
\end{equation}
We observe that
$\|\LL_{s-m}(u_{\tau_n}-u)\|_{L^2(\R^d)}=\|u_{\tau_n}-u\|_{\dot H^{2s-2m}(\R^d)}$.
Let $s\in[2m,2m+1)$, defining $r$ such that $1+r=2s-2m$, it holds $r<s$.
By Lemma \ref{lemma:convergence2}, we have
$\LL_{s-m}u_{\tau_n}\to\LL_{s-m}u$ strongly in $L^2((0,T);L^2(\R^d))$.
If $s=2m+1$ we have $\LL_{m+1}u_{\tau_n}\to\LL_{m+1}u$ weakly in $L^2((0,T);L^2(\R^d))$.
By Lemma \ref{lemma:convergence2}, we have also that $\div (\nabla\varphi \, u_{\tau_n})\to \div (\nabla\varphi \, u)$
strongly in $L^2((0,T);H^r({\R}^d))$ for any $r<s$.
$\LL_m\div (\nabla\varphi \, u_{\tau_n})\to \LL_m\div (\nabla\varphi \, u)$ strongly in $L^2((0,T);H^{r-2m}({\R}^d))$.
In particular, for $r=2m$ we obtain  
$\LL_m\div (\nabla\varphi \, u_{\tau_n})\to \LL_m\div (\nabla\varphi \, u)$ strongly in $L^2((0,T);L^{2}({\R}^d))$.
The convergences above and \eqref{euler3} show that
\begin{equation}\label{eq4}
	\lim_{n\to+\infty} \int_0^T N(u_{\tau_n}(t),\nabla\varphi(t,\cdot))\,\d t = \int_0^T N(u(t),\nabla\varphi(t,\cdot))\,\d t
\end{equation}
when $s\in[2m,2m+1)$.
Analogously we can prove \eqref{eq4} also in the case $s\in(2m+1,2m+2)$.

The proof of \eqref{wf} follows by \eqref{eq:DD3}, \eqref{eq3}, \eqref{eq4} and \eqref{eq5}. 
\end{proof}

We conclude this Section with the proof of Theorem \ref{th:main}. 
The part i) is exactly Proposition \ref{prop:existenceMM}.
The part ii) follows by Theorem \ref{th:convergence1}, the inequality \eqref{Bbasic}, Corollary \ref{cor:conv} and Lemma \ref{lemma:convergence2}.
The part iii) is exactly Theorem \ref{th:weak_form}.

\subsection*{Acknowledgements} 
%The authors would like to thank J. A. Carrillo for useful conversations on the topic of this paper. 
%{\RRR The authors acknowledge the referees for their careful reading of the paper and for their suggestions.}
%E.M. was partially supported by the FWF project M1733-N20.
The author acknowledge  support from the project MIUR - PRIN 2017TEXA3H ``Gradient flows, Optimal Transport and Metric Measure Structure''.
The author is member of the GNAMPA group of the Istituto Nazionale di Alta Matematica (INdAM)
and this research was partially supported by the GNAMPA Project 2019 ``Trasporto ottimo per dinamiche con interazione''.
%

%\subsection*{Conflict of interest}
%The authors declare that they have no conflict of interest.

\end{document}